\def\.{\cdot}
\def\e{\mathbf{e}}
\def\beq{\begin{equation}}
\def\eeq{\end{equation}}
\def\bea{\begin{eqnarray*}}
\def\eea{\end{eqnarray*}}
\def\beaa{\begin{eqnarray}}
\def\eeaa{\end{eqnarray}}
\def\ba{\begin{array}}
\def\ea{\end{array}}
\def\f{\varphi}
\def\L{\Lambda}
\def \RM{\mathbb{R}}
\def \ZM{\mathbb{Z}}
\def \HH{\mathbb{H}}
\def\V{\mathbb{V}}
\newcommand{\mc}{\mathcal}
\newcommand{\eps}{\varepsilon}
\newcommand{\Ss}{\mathbb{S}}
\newcommand{\X}{\mathbf{X}}
\def\id{\mathrm{id}}
\def\be{\begin{equation}}
\def\ee{\end{equation}}
\def\Aut{\mathrm{Aut }}
\def\Hom{\mathrm{Hom}}
\def\Sym{\mathrm{Sym}}
\def\so{\mathfrak{so}}
\def\R{\mathrm{R}}
\def\SO{\mathrm{SO}}
\def\OO{\mathrm{O}}
\def\End{\mathrm{End}}
\def\Sym{\mathrm{Sym}}
\def\Id{\mathrm{id}}
\def\T{\mathrm{T}}
\def\dd{\mathrm{d}}
\def\LL{\mathrm{L}}
\def\RO{R}
\def\p{\mathrm{p}}
\def\q{\mathrm{q}}
\def\dd{\mathrm{D}}
\newtheorem{pro}{Proposition}[section]
\newtheorem{lemma}[pro]{Lemma}
\theoremstyle{definition}
\title[Pestov and Weitzenb\"ock identities]{Correspondence between Pestov and Weitzenb\"ock identities}
\author[M. Ceki\'c]{Mihajlo Ceki\'c}
\address{Mihajlo Ceki\'c, Institut f\"ur Mathematik, Universit\"at Z\"urich, Winterthurerstrasse 190, CH-8057 Z\"urich, Switzerland}
\email{mihajlo.cekic@math.uzh.ch}
\author[T. Lefeuvre]{Thibault Lefeuvre}
\address{Thibault Lefeuvre, Université de Paris and Sorbonne Université, CNRS, IMJ-PRG, F-75006 Paris, France.}
\email{tlefeuvre@imj-prg.fr}
\author[A. Moroianu]{Andrei Moroianu}
\address{Andrei Moroianu, Université Paris-Saclay, CNRS,  Laboratoire de mathématiques d'Orsay, 91405, Orsay, France}
\email{andrei.moroianu@math.cnrs.fr}
\author[U. Semmelmann]{Uwe Semmelmann}
\address{Uwe Semmelmann, Institut f\"ur Geometrie und Topologie, Fachbereich Mathematik, Universit{\"a}t Stuttgart, Pfaffenwaldring 57, 70569 Stuttgart, Germany}
\email{uwe.semmelmann@mathematik.uni-stuttgart.de}
\date{\today}
\begin{document}

%%%%%%%%%%%%%%%%%%%%%%%%%%%%%%%%%%%%%%%%%%%%%%%%%%%%

\begin{abstract}
The aim of this note is to establish the correspondence between the twisted localized Pestov identity on the unit tangent bundle of a Riemannian manifold and the Weitzenb\"ock identity for twisted symmetric tensors on the manifold.
\end{abstract}
\maketitle

\section{Introduction}

The Weitzenb\"ock and Pestov identities are two standard identities in Riemannian geometry. While the former is usually phrased on the base manifold, the Pestov identity is given in terms of functions on the unit tangent bundle. The latter can be further \emph{localized} by considering specific functions which are spherical harmonics in restriction to every fiber of the unit tangent bundle: this is known as the localized Pestov identity. There is a tautological correspondence between trace-free symmetric tensors on the base manifold and spherical harmonics; hence, it is conceivable that the Weitzenböck identity \emph{should} be related to the localized Pestov identity but this correspondence has never been established anywhere formally. The purpose of this note is therefore to show that the localized Pestov identity is indeed equivalent to the Weitzenböck identity. More generally, we will consider this correspondence for twisted objects, where we twist by an auxiliary vector bundle over the Riemannian manifold. As both identities require a certain amount of notation before being stated, we refer the reader to Proposition \ref{tw} below for the twisted Weitzenböck identity, and Proposition \ref{proposition:plpi} for the twisted localized Pestov identity. As for the introduction, we provide a brief account on the history of these identities, and for which purposes they are used.

The Pestov identity is an $L^2$ energy identity on the unit tangent bundle of a Riemannian manifold which was first introduced by Mukhometov \cite{Mukhometov-75,Mukhometov-81} and Amirov \cite{Amirov-86}, then in a more general form by Pestov and Sharafutdinov \cite{Pestov-Sharafutdinov-88, Sharafutdinov-94}, and later written in an intrinsic way by Knieper \cite{Knieper-02}\footnote{We also remark that in \cite[Appendix]{Knieper-02}, Knieper argues that the Pestov identity is a ``formula of Weitzenböck type''. Somehow, the present paper makes this intuition rigorous.}. More recently, a \emph{twisted} identity (that is, involving an auxiliary vector bundle) was obtained by Guillarmou, Paternain, Salo and Uhlmann \cite{Paternain-Salo-Uhlmann-15,Guillarmou-Paternain-Salo-Uhlmann-16}. The Pestov identity was found to play an essential role in two problems of Riemannian geometry on negatively-curved manifold, namely:
\begin{enumerate}[label=(\roman*), itemsep=5pt]
    \item the \emph{marked length spectrum rigidity} problem which consists in recovering a metric from the knowledge of the lengths of its closed geodesics (marked by the free homotopy of the manifold). Equally important and intimately related are the \emph{tensor tomography question} which asks to recover a tensor from its integrals along closed geodesics, and \emph{inverse spectral problems}, which ask if the spectrum of a geometric operator determines the geometry; see \cite{Guillemin-Kazhdan-80, Croke-Sharafutdinov-98,Paternain-Salo-Uhlmann-13, Paternain-Salo-Uhlmann-14-1, Guillarmou-Lefeuvre-18} for references where the Pestov identity is used; see also \cite{Croke-90,Otal-90,Croke-Fathi-Feldman-92,Hamenstadt-99} for further references on the marked length spectrum.
    \item the \emph{ergodicity of the frame flow} which consists in showing that the only measurable functions that are invariant by the frame flow on the frame bundle are the constant functions, see \cite{Cekic-Lefeuvre-Moroianu-Semmelmann-21, Cekic-Lefeuvre-Moroianu-Semmelmann-23,Cekic-Lefeuvre-22} for references where the Pestov identity is used; see also \cite{Brin-Gromov-80,Brin-Karcher-84,Burns-Pollicott-03} for further references on frame flow ergodicity.
\end{enumerate}

Let us also mention that there are other versions of the Pestov identity related to \emph{thermostat flows} \cite{Jane-Paternain-09}, and that the (localized) twisted Pestov identity for \emph{non-metric} connections can be improved using Carleman estimates \cite{Paternain-Salo-22}.

The Weitzenb\"ock formula usually expresses a curvature term as a linear combination of operators of the form $P^*P$, where $P$ is a first-order differential operator, typically a projection of the covariant derivative. It
is an important tool for combining differential geometric aspects with topological aspects on compact Riemannian manifolds, see \cite{Bourguignon-90} for a nice review. This is prominently illustrated in the Bochner method, where the vanishing of Betti numbers follows under suitable curvature assumptions, and also for the non-existence of metrics of positive scalar curvature on spin manifolds with non-vanishing $\hat A$-genus. Moreover, it is used to prove eigenvalue estimates for Laplace and Dirac type operators. 

In this note we give a self-contained proof of the Weitzenb\"ock formula on trace-free symmetric tensors. This is a special case of a more general method
introduced in \cite{Semmelmann-Weingart-10}. Here we will show in addition how to extend
the Weitzenb\"ock formula to the case of symmetric tensors twisted with an auxiliary vector bundle $E$. Finally, we show that this twisted Weitenzenb\"ock formula translates into the localized twisted Pestov identity on the unit tangent bundle. \\

\noindent \textbf{Acknowledgement:} The authors wish to thank the CIRM, where part of this article was written, for support and  hospitality. M.C. acknowledges the support of an Ambizione grant (project number 201806) from the Swiss National Science Foundation. We would also like to thank G.~Knieper for pointing out the reference \cite{Knieper-02} which was missing in an earlier version of this article; we thank G.~Paternain, M.~Salo, and G.~Uhlmann for their comments.

\section{Symmetric tensors}

In this section we recall basic formulas for symmetric tensors as well as the definition and first properties of conformal Killing tensors. More details can be
found in \cite{Heil-Moroianu-Semmelmann-16}.

\subsection{The symmetric algebra of a vector space}
Let $(\T, g):=\R^n$ be the standard Euclidean vector space of dimension $n$. We denote with
$\Sym^k \T \subset \T^{\otimes k}$ the $k$-fold symmetric tensor product of $\T$. Elements of $\Sym^k \T$
are symmetrized tensor products
\begin{equation}
    \label{equation:product}
v_1 \cdot \ldots \cdot   v_k := \sum_{\sigma \in S_k} \, v_{\sigma(1)} \otimes \ldots \otimes v_{\sigma(k)},
\end{equation}
where $v_1, \ldots, v_k$ are vectors in $\T$. In particular we have $v\cdot u = v\otimes u + u \otimes v$ for $u, v \in \T$. Some authors (see \cite[page 156]{Paternain-Salo-Uhlmann-23}) use another convention for the symmetric product and divide by $k!$ in \eqref{equation:product}.

Using the metric $g$, one can identify $\T$ with $\T^*$. Under this identification, $g\in\Sym^2 \T^*\simeq\Sym^2 \T$ can be written as $g=\tfrac12  \sum_i \e_i \cdot \e_i$, for any
orthonormal basis $\{ \e_i\}$. The direct sum $\Sym\,\T:=\bigoplus_{k\ge 0} \Sym^k \T$ is endowed with a commutative product making $\Sym\, \T$ into a $\ZM$-graded commutative algebra.
The scalar product $g$ induces a scalar product on $\Sym^k \T$, also denoted by $g$, defined by 
$$
g(v_1 \cdot \ldots \cdot v_k,w_1 \cdot \ldots \cdot w_k) \;=\; \sum_{\sigma \in S_k}
\, g(v_1, w_{\sigma(1)}) \cdot \ldots  \cdot g(v_k, w_{\sigma(k)}) .
$$
With respect to this scalar product, every element $K$ of $\Sym^k \T$ can be identified with a symmetric $k$-linear map (i.e. a polynomial of degree $k$) on $\T$ by the formula 
$$K(v_1,\ldots,v_k)=g(K,v_1\cdot\ldots\cdot v_k) .
$$
For every $v\in \T$, the metric adjoint of the linear map
$
v\cdot: \Sym^k \T \rightarrow \Sym^{k+1} \T, \;  K \mapsto v \cdot K
$
is the contraction
$
v\lrcorner : \Sym^{k+1} \T \rightarrow \Sym^{k} \T, \;  K \mapsto v \lrcorner \, K
$, defined by $(v \lrcorner \, K) (v_1, \ldots , v_{k-1}) = K(v, v_1, \ldots, v_{k-1})$.
In particular we have $v \lrcorner \, u^k = k g(v, u) u ^{k-1},$ for all $  v, u \in \T$.

We introduce the linear map $\, \deg : \Sym\, \T \rightarrow \Sym\, \T$, defined by
$\deg (K)  = k K$ for $ K \in \Sym^k \T$. Then we have\;
$$
\sum_i \e_i \cdot \e_i \lrcorner \, K = \deg(K), \quad \sum_i \e_i \lrcorner \e_i \cdot K = n K + \deg(K),
$$
where $\{\e_i\}$ denotes an orthonormal frame of $(\T, g)$. Note that if $K\in \Sym^k \T$
is considered as a polynomial of degree $k$ then $v \lrcorner K$ corresponds to
the directional derivative $\partial_v K$ and the last formula is nothing else than the well-known Euler formula on homogeneous functions.

Contraction and multiplication with the symmetric tensor $\LL:=\sum_i \e_i\cdot \e_i = 2g$ defines two  additional  linear maps:
$$
\L : \Sym^k \T \rightarrow \Sym^{k-2} \T , \quad K \mapsto  \sum_i \e_i \lrcorner \,  \e_i \lrcorner \,  K 
$$
and
$$
\LL  : \Sym^{k-2} \T \rightarrow \Sym^{k} \T , \quad K \mapsto  \LL \cdot K ,
$$
which are adjoint to each other.
It is straightforward to check the following algebraic commutator relations
\beq\label{commu}
[\, \Lambda, \, \LL  \,] \;=\; 2n\,  \Id \;+\;4 \deg,\quad [\, \deg, \LL  \,] = 2\, \LL , \quad [\,\deg, \L\,] = -\,2 \,\Lambda  ,
\eeq
and for every $v\in T$:
\beq \label{commu2}
[\, \Lambda, \, v \,\cdot \, ] \;=\; 2\, v \,  \lrcorner  \, , \quad
[\, v \lrcorner \,,\, \LL  \,  \,] \;=\; 2\, v \cdot\, ,\quad
[\,\L , \, v \lrcorner \, \,] \;=\; 0 \;=\; [\, \LL , \, v \cdot \,]  . 
\eeq

For $\T = \RM^n$, the standard $\OO(n)$-representation induces a reducible
$\OO(n)$-repre\-sentation on $\Sym^k \T$. We denote by
$
\Sym^k_0 \T := \ker( \L : \Sym^k \T \rightarrow \Sym^{k-2} \T)
$
the space of trace-free symmetric $k$-tensors. 

It is well known that $\Sym^k_0 \T$ is an irreducible $\OO(n)$-representation and
we have the following decomposition into irreducible summands
$$
\Sym^k \T  \;\cong\;  \Sym^k_0 \T\;\oplus\; \Sym^{k-2}_0 \T \;\oplus\;  \ldots \ ,
$$
where the last summand
in the decomposition is $\RM$ for $k$ even and $\T$ for $k$ odd.
The summands 
$\Sym^{k- 2i}_0 \T$ are embedded into $\Sym^k \T$ via the map $\LL ^i$.  Corresponding to the
decomposition above any $K \in \Sym^k \T$ can be uniquely decomposed as
$$
K \;=\;  K_0 \;+\;  \LL K_1 \;+\;  \LL ^2K_2 \;+ \; \ldots
$$
with $K_i \in \Sym^{k-2i}_0 \T$, i.e. $\Lambda K_i = 0$. 
We will call this decomposition the {\it standard decomposition} of $K$.
In the following, the
subscript $0$ always  denotes the projection of an element from $\Sym^k \T$ onto its
component in $\Sym^k_0 \T$. Note that for any $v \in \T$ and $K \in \Sym^k_0 \T$ 
we have the following projection formula
\beq \label{projection}
(v \cdot K )_0 \;=\; v \cdot K \;-\; \tfrac{1}{n + 2 k-2}\, \LL  \,  (v \lrcorner \, K) .
\eeq
Indeed, using the commutator relations \eqref{commu} we have
$
\L ( \LL  \,  (v \lrcorner \,  K)) = (2n + 4(k-1)) \,  (v \lrcorner \,  K) 
$,
since $\Lambda $ commutes  with $v \lrcorner \, $ and $\Lambda K=0$. Moreover $\Lambda (v \cdot K) = 2\, v \lrcorner \, K$. Thus the right-hand side of 
\eqref{projection} is in the kernel of $\L$, i.e. it computes the projection $(v \cdot K)_0$.

\subsection{Conformal Killing tensors}\label{confKill}
Let $(M^n, g)$ be a Riemannian manifold with Levi-Civita connection $\nabla$. All the algebraic considerations above extend
to vector bundles over $M$, e.g. the $\OO(n)$-representation $\Sym^k \T$
defines the real vector bundle $\Sym^k \T M$. The $\OO(n)$-equivariant maps
$\LL $ and $\L$ define bundle maps between the corresponding bundles. The same
is true for the symmetric product $\cdot$ and the contraction $\lrcorner$. We
will use the same notation for the bundle maps on $M$.

Next we will define first order differential operators on sections of $\Sym^p \T M$. We have
$$
\dd : C^\infty(M,\Sym^k  \T M) \rightarrow C^\infty(M,\Sym^{k+1}\T M), \quad K \mapsto \sum_i \e_i \cdot \nabla_{\e_i}K ,
$$
where $\{\e_i\}$ denotes from now on a local orthonormal frame. The symmetric tensor $\dd K$ is the complete
symmetrisation of $\nabla K$, in the sense that 
\begin{equation}\label{dk}
\begin{split}
g(\dd K,X^{k+1})&=\sum_i  g (\nabla_{\e_i} K,\e_i\lrcorner X^{k+1}) = (k+1)\sum_i  g (\nabla_{\e_i} K, g(\e_i,X)X^{k})\\ &=(k+1) g (\nabla_{X} K, X^{k})
\end{split}
\end{equation}
for every $X\in \T M$.
The formal adjoint of 
$\dd$ is the divergence operator  $\dd^*$ defined by
$$
\dd^* : C^\infty(M,\Sym^{k+1} \T M) \rightarrow C^\infty(M,\Sym^{k}\T M), \quad K \mapsto -  \sum_i \e_i \lrcorner\, \nabla_{\e_i}K  .
$$
As an immediate consequence of  the definition we have that the operator $\dd$ acts as a \emph{derivation} on the algebra of symmetric tensors, i.e. for any $K_1 \in C^\infty(M,\Sym^k  \T M)$ and $K_2 \in  C^\infty(M,\Sym^l  \T M)$ the following equation holds
$$
\dd ( K_1 \cdot K_2) \;=\; \dd K_1 \cdot K_2 \,+\,K_1 \cdot \dd K_2.
$$
Moreover, an easy calculation proves that the operators $\dd$ and $\dd^*$ satisfy the  commutator relations:
\beq\label{commu3}
[ \, \Lambda, \,\dd^*\,]  \;=\; 0 \;=\;  [\,\LL ,\, \dd\, ], \quad [\, \Lambda, \,\dd\,] \; =\; -2 \dd^*, \quad [\, \LL ,\, \dd^*\, ] \;=\;  2 \,\dd .
\eeq

We also consider the operator 
$$
\dd_0: C^\infty(M,\Sym^k_0 \T M) \rightarrow C^\infty(M, \Sym^{k+1}_0\T M), \quad K \mapsto (\dd K)_0  .
$$
According to \eqref{projection}, we have $\dd_0 K=\dd K + \tfrac{1}{n + 2k-2}\, \LL  \,  \dd^* K$ for every $K\in C^\infty(M,\Sym^k_0 \T M)$. The formal adjoint $\dd_0^*: C^\infty(M,\Sym^{k+1}_0 \T M) \rightarrow C^\infty(M, \Sym^{k}_0\T M)$ is clearly equal to the restriction of $\dd^*$ to $C^\infty(M,\Sym^{k+1}_0 \T M)$.

A symmetric tensor $K \in C^\infty(M, \Sym^k \T M )$  is called {\it conformal Killing tensor} if there exists some symmetric tensor
$ k \in C^\infty(M, \Sym^{k-1} \T M)$  with $\dd K = \LL \, k$.  Note that $K$ is conformal Killing if and only if its trace-free part is 
conformal Killing. Indeed, since $\dd$  and $\LL$ commute, if
$K = \sum_{i\ge 0} \LL^i K_i$, with $K_i \in C^\infty(M,\Sym^{k-2i}_0 \T M)$ is the standard decomposition of $K$, then
$\dd K = \sum_{i \ge 0} \LL^i \dd K_i$, so $\dd K$ is in the image 
of $\LL$ if and only if $\dd K_0$ is in the image of $\LL$. More precisely we have the following characterisation (see also \cite[Lemma 3.3]{Heil-Moroianu-Semmelmann-16}): a symmetric tensor  $ K \in C^\infty(M, \Sym^k  \T M)$ is a conformal Killing tensor if and only if
\begin{equation}\label{confK}
\dd K_0 = - \tfrac{1}{n + 2k-2} \, \LL \dd^* K_0  .
\end{equation}
or, equivalently, if and only if the symmetric tensor $K$ satisfies the  condition $ \dd_0 K_0 = 0$. 

Let $E$ be a real  vector bundle over $M$ with connection $\nabla^{E}$. We extend $\dd$ and $\dd_0$ to twisted operators
\[
\dd : C^\infty(M,\Sym^k \T M \otimes E) \to C^\infty(M,\Sym^{k+1} \T M \otimes E),
\]
\[
\dd_0 : C^\infty(M,\Sym^k_0 \T M \otimes E) \to C^\infty(M,\Sym^{k+1}_0 \T M \otimes E),
\]
defined on decomposable elements  by 
$$
\dd (K \otimes \xi ) = \dd K \otimes \xi + \sum_i(\e_i \cdot K) \otimes \nabla_{\e_i}^{E}\xi ,\quad \dd_0 (K \otimes \xi ) =\dd_0 K \otimes \xi + \sum_i(\e_i \cdot K)_0 \otimes \nabla_{\e_i}^{E}\xi,
$$
obtained from the tensor product of Levi-Civita and $\nabla^{E}$ connections. In this case, sections in $\ker \dd $ are called \emph{twisted} Killing tensors and  sections in $\ker \dd_0$ are called
\emph{twisted} conformal Killing tensors.

%
%%%%%%%%%%%%%%%%%%%%%
\section{Weitzenb\"ock formulas}
%%%%%%%%%%%%%%%%%%%%%
%
Let $(M^n, g)$ be an oriented Riemannian manifold with Riemannian curvature tensor $R$. Let $\RO : \Lambda^2 \T M \rightarrow \Lambda^2 \T M$ be the curvature operator  defined
by $g(\RO (X \wedge Y), Z\wedge U) = R(X,Y,Z, U)$. 
With this convention we have
$\RO = - \, \Id$ on the standard sphere.

 Let 
$P=P_{\SO(n)}M$ be the frame bundle of $M$ and let $VM$ be the vector bundle associated to $P$ via a
$\SO(n)$-representation $\rho: \SO(n) \rightarrow \Aut (V)$, where $\Aut(V)$ denotes the isometries of a Euclidean vector space $(V, g_V)$. Then the curvature endomorphism $q(R) \in \End \, VM$ is  defined as
\begin{equation}\label{qr}
q(R) \;:=\; \tfrac12 \, \sum_{i, j} (\e_i \wedge \e_j)_\ast   \RO(\e_i \wedge \e_j)_\ast  .
\end{equation}
Here $\{\e_i\}, i = 1, \ldots n$, is a local orthonormal frame of $\T M$ and for $X\wedge Y \in \Lambda^2 \T M $
we define $(X \wedge Y)_\ast = \rho_\ast (X \wedge Y)$, where $\rho_\ast: \so(n) \rightarrow \End(V)$ is the differential
of $\rho$. In particular, 
the standard action of $\Lambda^2\T M$ on $\T M $ is written as
$
(X\wedge Y)_\ast\, Z \;=\; g( X,\,Z)\,Y \;-\; g(Y,\, Z) \, X = (Y \cdot X \,\lrcorner - X \cdot Y \, \lrcorner \,) Z
$.
This is compatible with
$$
g( (X \wedge Y)_\ast Z, U ) \;=\; g( X \wedge Y, Z \wedge U ) \;=\; g(X,Z) \, g(Y, U) \;-\; g( X, U)\, g( Y, Z)   .        
$$

%For  any section $\varphi \in C^\infty(M,VM)$ we have
%$
% \RO (X \wedge Y)_\ast \, \varphi \;=\; R_{X,Y}\,\varphi
%$.
%From \eqref{qr} we see that $q(R)$ can be written as 
%\begin{equation}\label{qr1}
%q(R) = \sum_{\alpha=1}^{\tfrac{n(n-1)}2} \omega_{\alpha \, \ast}   \RO(\omega_\alpha)_\ast
%\end{equation}
%for any orthonormal frame $\{\omega_\alpha\}_\alpha$ of $\Lambda^2\T M$. Moreover it is easy to verify that $q(R)$ is a symmetric endomorphism of the vector bundle
%$VM$.

Let $\T =\mathbb{R}^n$ be the 
standard representation of $\SO(n)$ defining the tangent bundle $\T M$. Then any $\SO(n)$-equivariant endomorphism $\p \in \End_{\SO(n)} (\T \otimes V)$ induces an $\SO(n)$-equivariant element $\tilde \p\in\Hom_{\SO(n)}(\T \otimes \T \otimes V, V)$ defined by
$$\tilde \p(a\otimes b \otimes v) := (a \,  \lrcorner\otimes \, \id) \, \p(b \otimes v),\qquad \forall\ a, b \in \T ,\ v \in V.$$

We note at this point that equivariant objects give rise to global parallel sections which we will denote by the same letter; for instance $p$ defines a parallel section $p \in C^\infty(M, \End(\T M \otimes V M))$. Important examples of such endomorphisms are the  orthogonal projections $\p_i,\, i =1, \ldots, N$, onto the summands in an $\SO(n)$-invariant decomposition $\T  \otimes V = V_1 \oplus \ldots \oplus V_N$.  Another example is the so-called
{\it conformal weight operator}  $B \in \End (\T \otimes V)$  introduced in \cite{Gauduchon-91}
(see also \cite{Calderbank-Gauduchon-Herzlich-00}) and defined as
%\red{TO DO}
$$
B(b \otimes v) := \sum_i \e_i \otimes (\e_i \wedge b)_\ast v.$$
The corresponding element $\tilde B \in \Hom (\T \otimes \T \otimes V, V)$ is given by
$$
\tilde B(a \otimes b \otimes v) = (a \wedge b)_* v  .
$$

For every equivariant orthogonal projector $\p \in \End_{\SO(n)} (\T \otimes V)$ we define a first order differential operator $P := \p \nabla $.

If $K$ is a section of $VM$, then $\nabla^2K = \sum_i \e_i \otimes \e_j \otimes \nabla^2_{\e_i, \e_j}K$ is a section of the bundle $\T M \otimes \T M \otimes VM$. Here for vector fields $X, Y$ on $M$ we denote $\nabla^2_{X, Y} K := \nabla_X \nabla_Y K - \nabla_{\nabla_X Y} K$; then the curvature endomorphism is given by $R_{X, Y} = \nabla^2_{X, Y} - \nabla^2_{Y, X}$. We can thus obtain natural second order operators by applying elements of the bundle $\Hom (\T M\otimes \T M\otimes V M, V M)$ to $\nabla^2 K$.

\begin{lemma}\label{bq}\cite[Proposition 3.1 and Lemma 3.6]{Uwe-06} The following relations hold:
$$
\tilde B  \nabla^2 = q(R), \qquad \tilde \p \nabla^2 =- P^*  P,
$$
where 
$P^*$ is the formal adjoint of $P$.
\end{lemma}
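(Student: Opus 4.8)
The plan is to compute both contractions $\tilde B \nabla^2 K$ and $\tilde{\p} \nabla^2 K$ directly in a local orthonormal frame $\{\e_i\}$, using the definitions of $\tilde B$, $\tilde{\p}$ and $\nabla^2$ given above. For the first identity, I would write $\tilde B \nabla^2 K = \sum_{i,j} \tilde B(\e_i \otimes \e_j \otimes \nabla^2_{\e_i,\e_j}K)$ and use the formula $\tilde B(a\otimes b\otimes v) = (a\wedge b)_* v$ to get $\sum_{i,j}(\e_i\wedge\e_j)_*\nabla^2_{\e_i,\e_j}K$. Since $(\e_i\wedge\e_j)_*$ is antisymmetric in $i,j$, only the antisymmetric part of $\nabla^2_{\e_i,\e_j}$ contributes, and $\nabla^2_{\e_i,\e_j} - \nabla^2_{\e_j,\e_i} = R_{\e_i,\e_j}$; substituting and comparing with \eqref{qr} — recalling $g(\RO(\e_i\wedge\e_j),\e_k\wedge\e_l) = R(\e_i,\e_j,\e_k,\e_l)$ so that $\RO(\e_i\wedge\e_j) = \tfrac12\sum_{k,l}R(\e_i,\e_j,\e_k,\e_l)\,\e_k\wedge\e_l$ — one identifies the result with $q(R)K$ up to the factor $\tfrac12$ that appears in both expressions. (One must take care that the frame-independent expression is evaluated at a point where $\nabla\e_i = 0$, so that $\nabla^2_{\e_i,\e_j}$ coincides with the naive second derivative; this is where the $\nabla_{\nabla_X Y}$ correction term in the definition of $\nabla^2$ is used.)

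For the second identity, I would first recall that $P = \p\nabla$ with $\p$ an equivariant \emph{orthogonal} projector, so $\p^* = \p = \p^2$. Then for sections $K, L$ of $VM$ one computes $\la PK, PL\ra_{L^2} = \la \p\nabla K, \p\nabla L\ra_{L^2} = \la \nabla K, \p\nabla L\ra_{L^2}$, and integrating by parts against $\nabla$ (whose formal adjoint is $\nabla^* = -\sum_i \e_i\lrcorner\nabla_{\e_i}$, or more precisely the contraction in the $\T M$-slot) moves $\nabla$ onto $\p\nabla L$; at a point with $\nabla\e_i = 0$ and using that $\p$ is parallel (being induced by an equivariant object), this yields $-\sum_i(\e_i\lrcorner\otimes\id)\,\p\,\nabla^2_{\e_i,\cdot}L$, which by the definition of $\tilde{\p}$ is exactly $-\tilde{\p}\nabla^2 L$. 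Hence $P^*P = -\tilde{\p}\nabla^2$ as operators.

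The main obstacle — really the only subtle point — is bookkeeping the symmetrization conventions and the curvature sign/factor conventions consistently: the paper uses the unnormalized symmetric product \eqref{equation:product} (no $1/k!$), the contraction $v\lrcorner u^k = kg(v,u)u^{k-1}$, and the curvature operator normalized so that $\RO = -\Id$ on the sphere, and the factor $\tfrac12$ in \eqref{qr}. I would therefore carry out the first computation carefully once in a fixed frame, tracking exactly how the $\tfrac12$ in \eqref{qr} matches the $\tfrac12$ coming from $\RO(\e_i\wedge\e_j) = \tfrac12\sum_{k,l}R(\cdots)\e_k\wedge\e_l$ together with the antisymmetrization $\nabla^2_{\e_i,\e_j} - \nabla^2_{\e_j,\e_i}$. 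Everything else is a routine integration by parts. Since the statement is quoted from \cite[Proposition 3.1 and Lemma 3.6]{Uwe-06}, one could alternatively just cite it, but the self-contained computation above is short enough to include.
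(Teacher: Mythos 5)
Your proposal is correct and follows essentially the same route as the paper: the first identity by antisymmetrizing $\nabla^2_{\e_i,\e_j}$ under $(\e_i\wedge\e_j)_*$ to produce $\tfrac12 R_{\e_i,\e_j}$ and matching the normalization in \eqref{qr}, and the second by computing $\nabla^*=-\sum_i(\e_i\lrcorner\otimes\id)\nabla_{\e_i}$ and using that $\p$ is parallel with $\p=\p^*=\p^2$. The only cosmetic difference is that you run the second computation from the adjoint pairing $\la PK,PL\ra$ back to $-\tilde\p\nabla^2$, whereas the paper manipulates $\tilde\p\nabla^2$ directly into $-\nabla^*\p^*\p\nabla$; these are the same argument.
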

\begin{proof} Let $(\e_i)$ be a local orthonormal frame of $\T M$, parallel at the point where the computations are done (i.e. satisfying $\nabla_{\e_i} \e_j = 0$ for all $i, j$). 
The first formula is immediate:
$$\tilde B  \nabla^2=\sum_{i,j}(\e_i\wedge \e_j)_*\nabla^2_{\e_i,\e_j}=\tfrac12\sum_{i,j}(\e_i\wedge \e_j)_*R_{\e_i,\e_j}=q(R).$$

In order to prove the second one, we first compute the formal adjoint of $\nabla$. For all sections $\f$ of $VM$ and $\psi$ of $\T M\otimes VM$ we have 
\bea g(\nabla\f,\psi)&=&g\left(\sum_i \e_i\otimes\nabla_{\e_i}\f,\psi\right)=\sum_ig(\nabla_{\e_i}\f,(\e_i\lrcorner\otimes\id)\psi)\\
&=&\sum_i \e_i(g(\f,(\e_i\lrcorner\otimes\id)\psi))-\sum_ig(\f,(\e_i\lrcorner\otimes\id)\nabla_{\e_i}\psi).
\eea
Since the first term in the last equation is the codifferential of the 1-form $X\mapsto -g(\f,(X\lrcorner\otimes\id)\psi)$, we obtain $\nabla^*=-\sum_i(\e_i\lrcorner\otimes\id) \nabla_{\e_i}$. Using this formula, together with the fact that $\nabla \p=0$, $\p^2=\p$ and $\p^*=\p$, we then compute:
\bea \tilde \p \nabla^2&=&\tilde \p  \left(\sum_{i,j}\e_i\otimes \e_j\otimes\nabla^2_{\e_i,\e_j}\right)=\sum_{i,j}(\e_i\lrcorner\otimes \id)  \p  (\e_j\otimes\nabla^2_{\e_i,\e_j})\\
&=&\sum_{i,j}(\e_i\lrcorner\otimes \id) \nabla_{\e_i}  \left(\p (\e_j\otimes\nabla_{\e_j})\right) = \sum_{i}(\e_i\lrcorner\otimes \id) \nabla_{\e_i}  (\p  \nabla)\\
&=&-\nabla^*  \p \nabla=-\nabla^*  \p^*  \p \nabla=-P^*  P.
\eea
\end{proof}

Let us now consider the orthogonal projections $\p_s,\, s =1, \ldots, N$, onto the summands in an $\SO(n)$-invariant decomposition $\T  \otimes V = V_1 \oplus \ldots \oplus V_N$. The above result shows that whenever the conformal weight operator $B$ can be expressed as a linear combination of the projections $\p_s$, i.e. $B=\sum_s a_s\p_s$ for $a_s \in \mathbb{R}$, we obtain a corresponding Weitzenb\"ock formula:
\begin{equation}\label{we}
    q(R)=-\sum_s a_s \,P_s^*  P_s
\end{equation} 
on sections of $VM$, where $P_s$ are the first order differential operators defined by $P_s(K): = \p_s(\nabla K)$ for every section $K$ of $VM$, giving a section of $\T M \otimes V M$.

This universal Weitzenb\"ock formula was considered  for the first time in 
\cite{Gauduchon-91} and later extended and generalised for  other holonomy groups in \cite{Semmelmann-Weingart-10}. In fact,  the irreducible summands $V_s$ appearing in the decomposition  of $\mathrm T \otimes V$ are all pairwise non-isomorphic as $\SO(n)$ representations. Thus the projections $\p_s$ form a basis of $\End_{\SO(n)}(\mathrm T \otimes V)$ and there is an  explicit formula for expressing the coefficients $a_s$ in terms of the highest weights of $V$ and $V_s$ (see \cite[Corollary 3.4]{Semmelmann-Weingart-10}).

%{\color{red} TODO: do $a_s$ such that $B=\sum_s a_s\p_s$ always exist? add a reference to a more general procedure (Uwe?)}

We consider now another $\SO(n)$-representation $E$ with an invariant scalar product and the corresponding vector bundle $EM$ over $M$, together with the induced metric. Let $\nabla^E$ be any metric connection on $E$, with curvature tensor denoted by $R^E$. For simplicity, we still denote by $\nabla^E$ the tensor product connection $\nabla\otimes\Id_{EM}+\Id_{VM}\otimes \nabla^E$ on $VM\otimes EM$. The projections $\p_s:\T  \otimes V\to \T  \otimes V$ define projections $\p_s\otimes \id:(\T  \otimes V)\otimes E\to (\T  \otimes V)\otimes E$ and, correspondingly, differential operators $P_s^E:=(\p_s\otimes \id) \nabla^E$, acting on $VM\otimes EM$.

Since $\sum_sa_s(\p_s\otimes\id)=B\otimes \id$ on $\T\otimes V\otimes E$, Lemma \ref{bq} yields at once 
\begin{equation}
\label{wei}\widetilde{B\otimes \id} (\nabla^E)^2=-\sum_s a_s  \, (P_s^E)^*  P_s^E,
\end{equation} 
acting on sections of $VM \otimes EM$. It remains to compute the action of the left-hand side operator. If $K\otimes \xi\in C^\infty(M,VM\otimes EM)$ is a decomposable section and $(\e_i)$ is an orthonormal frame parallel at the point of interest, we have
\begin{equation*}
\begin{split}
& (\widetilde{B\otimes \id} (\nabla^E)^2)(K\otimes \xi) \\
&=\widetilde{B\otimes \id}\left(\sum_{i,j}\e_i\otimes \e_j\otimes(\nabla^E)^2_{\e_i,\e_j}(K\otimes \xi)\right)\\
&=\widetilde{B\otimes \id}\left(\sum_{i,j}\e_i\otimes \e_j\otimes\left(\nabla^2_{\e_i,\e_j}K\otimes \xi+\nabla_{\e_i}K\otimes\nabla^E_{\e_j}\xi\right.\left.+\nabla_{\e_j}K\otimes\nabla^E_{\e_i}\xi+K\otimes(\nabla^E)^2_{\e_i,\e_j}\xi \right)\right)\\
&=\sum_{i,j}\left(((\e_i\wedge \e_j)_*\nabla^2_{\e_i,\e_j}K)\otimes \xi+((\e_i\wedge \e_j)_*\nabla_{\e_i}K)\otimes\nabla^E_{\e_j}\xi\right.
\\
&\left.\qquad\qquad\qquad\qquad\qquad+((\e_i\wedge \e_j)_*\nabla_{\e_j}K)\otimes\nabla^E_{\e_i}\xi+ (\e_i\wedge \e_j)_*K \otimes(\nabla^E)^2_{\e_i,\e_j}\xi \right)\\
&=(q(R)K)\otimes \xi+\tfrac12\sum_{i,j} (\e_i\wedge \e_j)_*K \otimes R^E_{\e_i,\e_j}\xi,
\end{split}
\end{equation*}
where the two middle terms cancel each other due to the skew-symmetry in $i$,$j$. Denoting by $q(R)^E$ the linear operator acting on (decomposable) sections of $VM\otimes EM$ by 
\begin{equation}\label{qrv}
    q(R)^E(K\otimes \xi):=(q(R)K)\otimes \xi+\tfrac12 \sum_{i,j}(\e_i\wedge \e_j)_*K\otimes R^E_{\e_i,\e_j}\xi,
\end{equation}
the previous relation \eqref{wei} implies the twisted Weitzenb\"ock-type formula 

\begin{equation}\label{twe}q(R)^E=-\sum_s a_s(P_s^E)^*  P_s^E\qquad \hbox{on } C^\infty(M,VM\otimes EM).
\end{equation} 

\medskip

We now consider the case of interest for us, namely $V=\Sym^k_0 \T$, where $\T:=\RM^n$ is the standard $\OO(n)$ representation of highest weight
$(1,0,\ldots , 0)$. 
Recall the classical decomposition into irreducible  $\OO(n)$ representations (e.g. see \cite{Semmelmann-Weingart-10}, p. 511-512):
%{\color{red} TODO:}
\beq\label{deco}
\T \otimes \Sym^k_0 \T \;\cong\; \Sym^{k+1}_0\T \;\oplus\; \Sym^{k-1}_0\T \;\oplus\; \Sym^{k,1} \T ,
\eeq
where $\Sym^k_0 \T$  
is the irreducible representation of highest weight $(k, 0, \ldots, 0)$ and 
$\Sym^{k,1} \T$ is the irreducible representation of highest weight
$(k,1,0, \ldots, 0)$. We note that  $\Sym^{k+1}_0\T$ is the so-called Cartan summand.
Its highest weight is the sum of the highest weights of $\T$ and $\Sym^k_0\T$. 

For later use, let us first express the operator $q(R)$ on symmetric tensors in a more convenient way.
\begin{lemma}\label{qr1}
For every $K\in \Sym^k\T M$, the following relation holds:
$$q(R)(K)=-\sum_{i,j,k} R_{\e_i,\e_j}\e_k\lrcorner(\e_j\cdot \e_k\cdot (\e_i\lrcorner K)).$$
\end{lemma}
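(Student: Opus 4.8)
The plan is to compute the action of the curvature endomorphism $q(R)$ on symmetric tensors directly from its definition \eqref{qr}, unraveling the action $(\e_i\wedge\e_j)_*$ on $\Sym^k\T M$ in terms of the symmetric product and contraction. Recall from the excerpt that the standard action of $\Lambda^2\T M$ on $\T M$ is $(X\wedge Y)_*Z = Y\cdot(X\lrcorner Z) - X\cdot(Y\lrcorner Z)$ (modulo the convention for $\cdot$). The first step is therefore to extend this to the induced action on $\Sym^k\T M$: since $(X\wedge Y)_*$ acts as a derivation of the symmetric algebra, one gets $(X\wedge Y)_* = \sum_m Y\cdot(X\lrcorner\,\cdot)_m - X\cdot(Y\lrcorner\,\cdot)_m$ acting slot-by-slot, which can be written compactly using $\sum_k \e_k\cdot\e_k\lrcorner = \deg$ type identities; more precisely one writes $(X\wedge Y)_*K = \sum_k \big(Y\cdot\e_k\cdot(X\lrcorner\e_k\lrcorner K) - X\cdot\e_k\cdot(Y\lrcorner\e_k\lrcorner K)\big)$ after inserting a resolution of the identity.

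The second step is to substitute $X=\e_i$, $Y=\e_j$ and apply \eqref{qr}, namely $q(R)=\tfrac12\sum_{i,j}(\e_i\wedge\e_j)_*\RO(\e_i\wedge\e_j)_*$. Here I would first compute $\RO(\e_i\wedge\e_j)_*$ acting on $K\in\Sym^k\T M$: since $\RO(\e_i\wedge\e_j)$ is again a $2$-form, expand it in the frame as $\sum_{p<q} R(\e_i,\e_j,\e_p,\e_q)\e_p\wedge\e_q$, or more symmetrically $\tfrac12\sum_{p,q}R_{\e_i,\e_j}$-type expressions. The cleaner route is to keep $R_{\e_i,\e_j}$ as an endomorphism of $\T M$ (extended as a derivation to $\Sym^k\T M$): one has $\RO(\e_i\wedge\e_j)_* = \sum_k \e_k\cdot\big((R_{\e_i,\e_j}\e_k)\lrcorner\,\cdot\big)$ acting on symmetric tensors, using that a skew-symmetric endomorphism acts on $\Sym^k$ as $\sum_k (A\e_k)\cdot(\e_k\lrcorner\,\cdot)$. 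Then composing with the outer factor $(\e_i\wedge\e_j)_* = \sum_\ell \big(\e_j\cdot(\e_i\lrcorner\,\cdot)\big)_\ell$-type expression and summing over $i,j$, the $\tfrac12\sum_{i,j}$ combined with the antisymmetry of $R_{\e_i,\e_j}$ in $(i,j)$ should collapse neatly; the Bianchi identity and the symmetries of $R$ are what make the terms recombine into the single claimed expression $-\sum_{i,j,k}R_{\e_i,\e_j}\e_k\lrcorner(\e_j\cdot\e_k\cdot(\e_i\lrcorner K))$.

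I expect the main obstacle to be bookkeeping: correctly tracking which slot each contraction and multiplication acts on, and matching the normalization coming from the convention $v\cdot u = v\otimes u + u\otimes v$ (rather than the $k!$-normalized one), so that no spurious factors of $2$ or $k$ survive. A useful sanity check along the way is the case $k=1$, where $\Sym^1\T M = \T M$, $q(R)$ should reduce to the Ricci endomorphism (up to sign/convention), and the claimed formula $-\sum_{i,j,k}R_{\e_i,\e_j}\e_k\lrcorner(\e_j\cdot\e_k\cdot(\e_i\lrcorner K))$ should be verified to give exactly $\mathrm{Ric}$ on vectors. Another check is $k=0$, where both sides vanish. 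Once the combinatorics is under control, the identity follows purely formally from \eqref{qr}, the derivation property of $(X\wedge Y)_*$ on $\Sym\,\T M$, and the first Bianchi identity; no analysis or integration is needed, so the proof is a finite algebraic manipulation carried out at a point with a parallel frame $(\e_i)$.
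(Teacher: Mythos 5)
Your overall strategy is the same as the paper's: compute $q(R)$ directly from \eqref{qr} by using the fact that a skew-symmetric endomorphism $A$ of $\T M$ acts on $\Sym^k\T M$ as the derivation $A_*K=\sum_i (A\e_i)\cdot(\e_i\lrcorner K)$, apply this to both factors $(\e_i\wedge\e_j)_*$ and $\RO(\e_i\wedge\e_j)_*=(R_{\e_i,\e_j})_*$, and finish with the symmetries of $R$. No Bianchi identity is needed, by the way: the argument only uses the antisymmetry of $R$ in each pair (which kills the $\tfrac12$ and lets one commute $\e_l\lrcorner$ past $\e_k\cdot$, since the commutator term is proportional to $\delta_{kl}$) together with the pair symmetry $g(R_{\e_l,\e_k}\e_i,\e_j)=g(R_{\e_i,\e_j}\e_l,\e_k)$.

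Two of your intermediate formulas are wrong as written and would derail the computation if carried forward. First, the ``more precise'' identity $(X\wedge Y)_*K=\sum_k\big(Y\cdot\e_k\cdot(X\lrcorner\e_k\lrcorner K)-X\cdot\e_k\cdot(Y\lrcorner\e_k\lrcorner K)\big)$ is false: since $\sum_k\e_k\cdot\e_k\lrcorner=\deg$ and contractions commute, its right-hand side equals $(k-1)\big(Y\cdot(X\lrcorner K)-X\cdot(Y\lrcorner K)\big)$, i.e. $(k-1)(X\wedge Y)_*K$. There is no resolution of the identity to insert: the derivation formula already reads $(X\wedge Y)_*K=Y\cdot(X\lrcorner K)-X\cdot(Y\lrcorner K)$ on all of $\Sym^k\T M$, and the extra pair $\e_k\cdot$, $\e_k\lrcorner$ in the statement of the lemma arises from \emph{composing} the two derivations, not from rewriting one of them. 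Second, $(R_{\e_i,\e_j})_*K=\sum_k(R_{\e_i,\e_j}\e_k)\cdot(\e_k\lrcorner K)$, which by skew-symmetry of $R_{\e_i,\e_j}$ equals $-\sum_k\e_k\cdot\big((R_{\e_i,\e_j}\e_k)\lrcorner K\big)$; your version omits this sign. With these two points corrected, the computation goes through exactly as in the paper; your $k=1$ check against the Ricci endomorphism is a sound way to confirm the final sign.
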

\begin{proof} For every skew-symmetric endomorphism $A$ of $\T M$ (identified with a section of $\Lambda^2 \T M$) we have $A_*K = \sum A\e_i\cdot (\e_i\lrcorner K).$ In particular, for $A=X\wedge Y$ we get $(X\wedge Y)_*K=Y\cdot (X\lrcorner K)-X\cdot(Y\lrcorner K)$.
We then compute using the symmetries of the Riemannian curvature tensor:
\begin{eqnarray*}q(R)(K)&=&\tfrac12\sum_{k,l}(\e_l\wedge \e_k)_*(R_{\e_l,\e_k})_*K=\tfrac12\sum_{i,k,l}(\e_l\wedge \e_k)_*(R_{\e_l,\e_k}\e_i\cdot (\e_i\lrcorner K))\\
&=&\sum_{i,k,l}\e_k\cdot \e_l\lrcorner(R_{\e_l,\e_k}\e_i\cdot \e_i\lrcorner K)=\sum_{i,k,l}\e_l\lrcorner(\e_k\cdot R_{\e_l,\e_k}\e_i\cdot (\e_i\lrcorner K))\\
&=&\sum_{i,j,k,l}\e_l\lrcorner(\e_k\cdot \e_j\cdot \e_i\lrcorner K)g(R_{\e_l,\e_k}\e_i,\e_j)=-\sum_{i,j,k}R_{\e_i,\e_j}\e_k\lrcorner(\e_k\cdot \e_j\cdot (\e_i\lrcorner K)).
\end{eqnarray*}
\end{proof}
Next we want  to describe projections and embeddings of the three summands. By \eqref{projection}, the map
$
\q_1 : \T \otimes \Sym^k_0\T \rightarrow \Sym^{k+1}_0 \T
$
onto the first summand is defined as
\beq\label{pi1}
\q_1(v \otimes K) \;:=\; (v \cdot K)_0 \;=\; v \cdot K \;-\; \tfrac{1}{n + 2k - 2}\, \LL \,  (v \lrcorner \,  K)  .
\eeq
The adjoint map
$
\q_1^* : \Sym^{k+1}_0 \T \rightarrow \T \otimes \Sym^k_0 \T
$
is easily computed to be 
\beq\label{pi10}
\q^*_1(K) = \sum_i \e_i \otimes (\e_i \lrcorner \, K).
\eeq
Note that for any vector $v\in \T$, the symmetric tensor
$v\lrcorner \, K$ is again trace-free, because $\, v \lrcorner \,$ commutes with $\L$.
Since $\q_1 \, \q^*_1 = (k+1) \, \Id$ on $\Sym^{k+1}_0\T$, we conclude that
\begin{equation}\label{pi11}
\p_1 \;:=\; \tfrac{1}{k+1} \, \q^*_1 \, \q_1 : \T \otimes \Sym^k_0 \T \;\rightarrow \;
\Sym^{k+1}_0 \T \;\subset \; \T \otimes \Sym^k_0 \T
\end{equation}
 is the orthogonal projection onto the irreducible summand of $\T \otimes \Sym^k_0 \T$ isomorphic to
 $\Sym^{k+1}_0\T $.

Similarly the map 
$
\q_2 : \T \otimes \Sym^k_0 \T \rightarrow \Sym^{k-1}_0 \T
$
onto the second summand in the decomposition \eqref{deco} is given by the contraction map
\beq\label{q10}
\q_2 ( v \otimes K)  := v \lrcorner \, K.
\eeq
In this case the adjoint map 
$
\q_2^*: \Sym^{k-1}_0 \T \rightarrow \T \otimes \Sym^k_0 \T
$
is computed to be
\beq\label{q11}
\q^*_2(K) \;=\; \sum_i \e_i \otimes (\e_i \cdot K)_0
\;=\; 
\sum_i \e_i \otimes \left(\e_i \cdot K - \tfrac{1}{n+2k-4} \, \LL  \, (\e_i \lrcorner \, K)\right). 
\eeq
It follows that
$$\;
\q_2 \, \q^*_2 = (n+k-1) \, \id \;-\; \tfrac{2k - 2}{n + 2k - 4} \, \id
=
\tfrac{(n+2k-2)(n+k-3)}{n+2k-4}\,\id ,
$$
so the projection onto the irreducible summand in
$\T \otimes \Sym^k_0 \T$ isomorphic to $\Sym^{k-1}_0 \T$  is given by
\begin{equation}\label{pi2}
\p_2 \;:=\; \tfrac{n+2k-4}{(n+2k-2)(n+k-3)} \, \q_2^* \, \q_2 
: \T\otimes \Sym^k_0 \T \;\rightarrow \; \Sym^{k-1}_0 \T \;\subset \; \T \otimes \Sym^k_0 \T ,
\end{equation}
valid for $n \geq 3$ and $k \geq 1$. The projection $\p_3$ onto the third irreducible summand in  $\T \otimes \Sym^k_0 \T$ is obviously given by $\p_3 = \id - \p_1 - \p_2$ .

The algebraic considerations above extend to vector bundles over $M$. In particular, the operators $\dd_0: C^\infty(M, \Sym^k_0 \T M)\to  C^\infty(M, \Sym^{k+1}_0 \T M)$ and $\dd_0^*: C^\infty(M, \Sym^k_0 \T M)\to  C^\infty(M, \Sym^{k-1}_0 \T M)$
introduced above can be described as
\begin{equation}\label{dd0}
    \dd_0 K  = \q_1\nabla K,\qquad\dd_0^* K = - \q_2\nabla K ,
\end{equation}
for every section $K \in C^\infty(M, \Sym^k_0 \T M)$. 
By \eqref{confK} together with  \eqref{pi1} and \eqref{pi11} we see that the kernel of 
$P_1= \p_1   \nabla$ consists exactly of trace-free  conformal Killing tensors. The kernel  of $P_2= \p_2   \nabla$ are the divergence 
free tensors, i.e. tensors in $\ker \dd_0^*$.

An easy calculation using the explicit formulas for $\q_1$ and $\q_2$ proves the following relation on $\T \otimes \Sym^k_0\T$ (see \cite[Proposition 6.1]{Heil-Moroianu-Semmelmann-16}):
$$
B = k\, \p_1 \; - \; (n+k-2) \, \p_2 \;-\, \p_3  .
$$
As explained above, this yields the Weitzenböck-type formula.
\begin{equation}\label{wei2}
 q(R) K = - k \, P_1^*P_1  K  \,+\, (n+k-2)\, P_2^*P_2  K  \,+ \, P_3^*P_3\, K.
\end{equation}
for any section $K$ of $\Sym^k_0 \,\T M$. In the present situation
it is easy to get the coefficients for $B$ by a direct calculation.
Alternatively one can use the general formula in terms of highest
weights mentioned above.

%{\color{red} TODO: add a reference to a more general algorithm/procedure to compute the coefficients $a_s$, see also previous red comments (Uwe?)}

Now, if $EM$ is a Euclidean vector bundle associated to a representation $E$ of $\SO(n)$ with metric connection $\nabla^E$, we denote by $\p_i^E:=\p_i\otimes\Id_{E}$, by  $\q_i^E:=\q_i\otimes\Id_{E}$ and by
\begin{equation}\label{pi}
P^E_i:=\p_i^E \nabla^E,\quad i = 1, 2, 3,
\end{equation}
and obtain as before the twisted counterpart of \eqref{wei2}
\begin{equation}\label{twistw}
 q(R)^E  = -k \, (P_1^E)^*P_1^E \,+ \,(n+k-2)  \, (P_2^E)^*P_2^E \,+ \,(P_3^E)^*P_3^E,
\end{equation}
acting on sections of $\Sym^k_0 \,\T M\otimes EM$.

 % and \eqref{pi2}
Since $\p_i^E$ are orthogonal projectors, we have $(\p_i^E)^* \p_i^E=\p_i^E$, so using \eqref{pi11} and recalling that $\dd_0 = \q_1^E \nabla^E$ (similarly to \eqref{dd0}), we obtain
\[
(P_1^E)^*P_1^E=(\nabla^E)^* (\p_1^E)^* (\p_1^E) \nabla^E=(\nabla^E)^* (\p_1^E) \nabla^E=\tfrac1{k+1}(\nabla^E)^* (\q_1^E)^* \q_1^E \nabla^E=\tfrac1{k+1}\dd_0^* \dd_0,
\]
and similarly using \eqref{pi2}, yields
 $$(P_2^E)^*P_2^E= \tfrac{n+2k-4}{(n+2k-2)(n+k-3)}\dd_0 \dd_0^*.$$
From these last two equations, together with \eqref{twistw} we obtain the following

 \begin{pro}[Twisted Weitzenböck formula]\label{tw}
 The following formula holds for sections of $\Sym^k_0 \,\T M\otimes EM$:
 \begin{equation}\label{twistw1}
 q(R)^E  = -\tfrac{k}{k+1} \dd_0^* \dd_0 +  \tfrac{(n+k-2)(n+2k-4)}{(n+2k-2)(n+k-3)} \dd_0 \dd_0^* + (P_3^E)^*P_3^E.
\end{equation}
 \end{pro}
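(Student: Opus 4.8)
The plan is to obtain \eqref{twistw1} directly from the general twisted Weitzenb\"ock formula \eqref{twistw} by rewriting its first two summands in terms of the twisted operators $\dd_0$ and $\dd_0^*$, the third summand $(P_3^E)^*P_3^E$ being simply carried along unchanged. The only inputs needed are that $\p_1^E$ and $\p_2^E$ are orthogonal projectors, so that $(\p_i^E)^*\p_i^E=\p_i^E$, together with the explicit factorisations $\p_1=\tfrac1{k+1}\,\q_1^*\q_1$ of \eqref{pi11} and $\p_2=\tfrac{n+2k-4}{(n+2k-2)(n+k-3)}\,\q_2^*\q_2$ of \eqref{pi2} (both tensored with $\Id_E$), and the twisted analogues $\dd_0=\q_1^E\nabla^E$, $\dd_0^*=-\q_2^E\nabla^E$ of \eqref{dd0}.

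With these at hand the computation is essentially a single line. For the first term,
$$
(P_1^E)^*P_1^E=(\nabla^E)^*(\p_1^E)^*\p_1^E\nabla^E=(\nabla^E)^*\p_1^E\nabla^E=\tfrac1{k+1}(\q_1^E\nabla^E)^*(\q_1^E\nabla^E)=\tfrac1{k+1}\,\dd_0^*\dd_0,
$$
and for the second, since the sign in $\dd_0^*=-\q_2^E\nabla^E$ is harmless upon forming $(\q_2^E\nabla^E)^*(\q_2^E\nabla^E)$,
$$
(P_2^E)^*P_2^E=(\nabla^E)^*\p_2^E\nabla^E=\tfrac{n+2k-4}{(n+2k-2)(n+k-3)}\,\dd_0\dd_0^*.
$$
Inserting these into \eqref{twistw} and multiplying out the scalar factors, $-k\,(P_1^E)^*P_1^E=-\tfrac{k}{k+1}\dd_0^*\dd_0$ and $(n+k-2)\,(P_2^E)^*P_2^E=\tfrac{(n+k-2)(n+2k-4)}{(n+2k-2)(n+k-3)}\dd_0\dd_0^*$, which is exactly \eqref{twistw1}.

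In all candor, once \eqref{twistw} is established there is no genuine obstacle left in this final step: it is pure bookkeeping. The substantive work lies upstream, namely proving the universal twisted Weitzenb\"ock identity \eqref{twistw} via Lemma \ref{bq} and the decomposition $B=k\,\p_1-(n+k-2)\,\p_2-\p_3$, and computing the normalising constants $\q_1\q_1^*=(k+1)\Id$ and $\q_2\q_2^*=\tfrac{(n+2k-2)(n+k-3)}{n+2k-4}\Id$. The two points to watch are that the formula for $\p_2$, hence the stated identity, presupposes $n\ge 3$ and $k\ge 1$ (whence the denominators $n+2k-4$ and $n+k-3$), and that the minus sign in $\dd_0^*=-\q_2^E\nabla^E$ must be tracked but cancels upon composing; the Cartan-complement term $(P_3^E)^*P_3^E$ admits no analogous clean expression in terms of $\dd_0$ and $\dd_0^*$ and is therefore left as is.
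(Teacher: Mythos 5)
Your argument is correct and coincides with the paper's own derivation: both start from the general twisted formula \eqref{twistw}, use $(\p_i^E)^*\p_i^E=\p_i^E$ together with the factorisations \eqref{pi11} and \eqref{pi2} and the identifications $\dd_0=\q_1^E\nabla^E$, $\dd_0^*=-\q_2^E\nabla^E$ to rewrite $(P_1^E)^*P_1^E=\tfrac1{k+1}\dd_0^*\dd_0$ and $(P_2^E)^*P_2^E=\tfrac{n+2k-4}{(n+2k-2)(n+k-3)}\dd_0\dd_0^*$. Your remarks on the range $n\ge 3$, $k\ge 1$ and on the sign in $\dd_0^*$ are accurate but add nothing beyond the paper's treatment.
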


\section{Fourier analysis in the fibers of the unit tangent bundle}

\label{ssection:fourier}

Further details on this section can be found in \cite{Paternain-99}, \cite[Section 2]{Paternain-Salo-Uhlmann-15}.

\subsection{Functions on the unit tangent bundle}

 We denote by $SM$ the unit tangent bundle of $(M,g)$ and by $\pi : SM \rightarrow M$ the projection on the base. There is a canonical splitting of the tangent bundle to $SM$ as:
\[
T(SM) =  \V \oplus \HH \oplus \mathbb{R} X,
\] 
where $X$ is the geodesic vector field, $\V := \ker d \pi$ is the vertical space and $\HH$ is the horizontal space defined in the following way. Define the \emph{connection map} $\mc{K} : T(SM) \rightarrow \T M$ as follows: let $v \in SM, w \in T_{v}(SM)$ and a curve $(-\eps,\eps) \ni t \mapsto v(t) \in SM$ such that $v(0)=v, \dot{v}(0)=w$. Denoting $x(t):=\pi(v(t))$, we have $\mc{K}_{v}(w) := \nabla_{\dot{x}(t)} v(t)|_{t=0}$. We denote by $g_{\mathrm{Sas}}$ the Sasaki metric on $SM$, which is the canonical metric on the unit tangent bundle, defined by:
\[
g_{\mathrm{Sas}}(w,w') := g(d \pi(w), d\pi(w')) + g(\mc{K}(w),\mc{K}(w')).
\]
Then the horizontal bundle $\HH$ is defined as the orthogonal complement of $X$ inside $\ker \mc{K}$. 

We define the \emph{normal bundle} $\mc{N} \rightarrow SM$ whose fiber at $v \in SM$ is given by $\mc{N}_v := v^\bot \subset T_{\pi(v)}M$. Then $d \pi : \HH \rightarrow \mc{N}, \mc{K} : \V  \rightarrow \mc{N}$ are both isometries and all these bundles over $SM$ are isomorphic. We will freely identify them in the following. In particular, we will think of the normal bundle $\mc{N}$ as the tangent bundle to the spheres.

For $x \in M$, the unit sphere
\[
S_xM = \left\{ v \in T_xM ~|~ |v|^2_x = 1\right\} \subset SM
\]
(endowed with the Sasaki metric) is isometric to the canonical sphere $(\Ss^{n-1},g_{\mathrm{can}})$. We denote its Laplace operator by $\Delta_x$.  Let $\Delta_{\V}$ be the vertical Laplacian acting on $f \in C^\infty(SM)$ as $\Delta_{\V}f(v) := \Delta_{\pi(v)}(f|_{S_{\pi(v)}M})(v)$, for every $v\in SM$. For $k \geq 0$ and $x\in M$, we introduce
\[
\Omega_k(x) = \ker\left(\Delta_x- k(n+k-2)\Id\right),
\]
the spherical harmonics of degree $k$. Observe that $\Omega_k \rightarrow M$ defines a vector bundle over $M$, and that $C^\infty(M, \Omega_k)$ is naturally identified with a subspace of $C^\infty(SM)$. Given $f \in C^\infty(SM)$, it can be decomposed as $f = \sum_{k \geq 0} {f}_k$ where ${f}_k \in C^\infty(M,\Omega_k)$ is the projection of $f$ onto spherical harmonics of degree $k$. We call \emph{Fourier degree} of $f$, denoted by $\mathrm{deg}(f)$, the maximal integer $k_0 \in \ZM_{\geq 0}$ (if it exists) such that ${f}_{k_0} \neq 0$; otherwise we set $\deg(f) = \infty$. We will also say that $f$ has \emph{finite Fourier content} if its degree is finite, that it is \emph{odd} (resp. \emph{even}) if it only contains odd (resp. even) spherical harmonics. 

It can be proved that the operator $X$ has the following mapping properties (see \cite[Section 3]{Paternain-Salo-Uhlmann-15}):
\[
X : C^\infty(M,\Omega_k) \rightarrow C^\infty(M,\Omega_{k+1}) \oplus C^\infty(M,\Omega_{k-1}).
\]
This is understood in the following sense: a section ${f}_k \in C^\infty(M,\Omega_k)$ defines in particular a smooth function in $C^\infty(SM)$ which we can differentiate in the $X$-direction and this only contains spherical harmonics of degree $k-1$ and $k+1$. Taking the projection on higher degree (resp. lower degree), we obtain an operator $X_+ : C^\infty(M,\Omega_k) \rightarrow C^\infty(M,\Omega_{k+1})$ of gradient type i.e. with injective principal symbol (resp. $X_- : C^\infty(M,\Omega_k) \rightarrow C^\infty(M,\Omega_{k-1})$ of divergence type) such that $X=X_+ + X_-$ and $X_+^*=-X_-$ (the latter being a mere consequence of the fact that $X^*=-X$ as $X$ preserves the Sasaki volume (also known as the Liouville measure) on $SM$). As $X_+$ acting on spherical harmonics of degree $k$ has injective principal symbol, its kernel is finite dimensional by elliptic theory. 
As a consequence of Lemma \ref{lemma:link-d} we will later see that elements in the kernel of $X_+$
correspond to conformal Killing tensors, i.e. elements in the kernel of $\dd_0$ as defined in Section \ref{confKill}.

%We call \emph{conformal Killing tensors} of degree $k \in \ZM_{\geq 0}$ the elements in its kernel. 
%It can be checked that this is a conformal invariant of the metric, see \cite[Section 3.6]{Guillarmou-Paternain-%Salo-Uhlmann-16}.

\subsection{Twist by a vector bundle}

\label{sssection:twisted-discussion}

Let ${E} \rightarrow M$ be a real vector bundle over $M$ equipped with a metric connection $\nabla^{E}$. Consider the pullback bundle $\mc{E}:=\pi^*E \rightarrow SM$ equipped with the pullback connection $\nabla^{\mc{E}}:=\pi^*\nabla^E$ and introduce the first order differential operator
\[
\X := \nabla^{\mc{E}}_X : C^\infty(SM,\mc{E})  \rightarrow C^\infty(SM,\mc{E}).
\]

The connection $\nabla^{\mc{E}}$ also gives rise to differential operators:
\[
\nabla^{\mc{E}}_{\HH},\ \nabla^{\mc{E}}_{\V} : C^\infty(SM,\mc{E}) \rightarrow C^\infty(SM,\mc{N} \otimes \mc{E} ),
\]
defined in the following way: for every section $f \in C^\infty(SM,\mc{E})$, the covariant derivative $\nabla^{\mc{E}} f \in C^\infty(SM, T^*(SM) \otimes \mc{E})$ can be identified with an element of $C^\infty(SM,T(SM) \otimes \mc{E})$ by applying the musical isomorphism $T^*(SM) \rightarrow T(SM)$ induced by the Sasaki metric. Using the orthogonal projections $\bullet_{\HH}$ and $\bullet_{\V}$ of $T(SM)$ onto $\HH$ and $\V$, respectively, one can then define the operators: 
\[
\nabla^{\mc{E}}_{\HH} f := d\pi( (\nabla^{\mc{E}} f)_\HH), \qquad \nabla^{\mc{E}}_{\V}f := \mc{K} ((\nabla^{\mc{E}} f)_\V),
\]
which take values in the bundle $\mc{N} \otimes \mc{E} \rightarrow SM$. In local coordinates, these operators have explicit expressions in terms of the connection $1$-form and we refer to \cite[Lemma 3.2]{Guillarmou-Paternain-Salo-Uhlmann-16} for further details.

If $(\xi_1,\ldots, \xi_r)$ is a local orthonormal frame of $E$, then smooth local sections $f$ of $\mc{E}$ can be written as:
\[
f(v) = \sum_{j=1}^r f^{(j)}(v) \xi_j(x) \in \mc{E}_x,\qquad\forall v\in S_xM,
\]
where $f^{(j)} \in C^\infty(SM)$ are locally defined functions. As before, each $f^{(j)}$ can be in turn decomposed into spherical harmonics. In other words, we can write $f = \sum_{k \geq 0}{f}_k$, where ${f}_k \in C^\infty(M, \Omega_k \otimes E)$.

As before, we can define the degree of $f \in C^\infty(SM,\mc{E})$ and we say that $f$ has \emph{finite Fourier content} if its expansion in spherical harmonics only contains a finite number of terms. The operator $\X$ maps
\begin{equation}
\label{eq:XX}
\X : C^\infty(M, \Omega_k \otimes E) \rightarrow C^\infty(M, \Omega_{k-1} \otimes E) \oplus C^\infty(M, \Omega_{k+1} \otimes E)
\end{equation}
and can be decomposed as $\X = \X_+ + \X_-$, where, if $u \in C^\infty(M,\Omega_k \otimes E)$, $\X_\pm u \in C^\infty(M,\Omega_{k \pm 1} \otimes E)$ denote the orthogonal projections on the twisted spherical harmonics of degree $k \pm 1$. The operator $\X_+$ is elliptic and thus has finite-dimensional kernel whereas $\X_-$ is of divergence type. Moreover, $\X_+^* = -\X_-$, where the adjoint is computed with respect to the canonical $L^2$ scalar product on $SM$ induced by the Sasaki metric and the metric on $E$. We also refer to the original articles of Guillemin-Kazhdan \cite{Guillemin-Kazhdan-80, Guillemin-Kazhdan-80-2} for a description of these facts and to \cite{Guillarmou-Paternain-Salo-Uhlmann-16} for a more modern exposition. It was shown in \cite[Theorem 4.1]{Guillarmou-Paternain-Salo-Uhlmann-16} (see also \cite[Corollary 4.2]{Cekic-Lefeuvre-Moroianu-Semmelmann-22} for a short argument) that flow-invariant sections, i.e. smooth sections in $\ker \X$ have \emph{finite Fourier content}.

\section{Symmetric tensors versus polynomial functions}

Considering symmetric tensors in $\Sym^k \T M$ as (pointwise) homogeneous polynomials of degree $k$ on $\T M$, gives linear maps
\begin{equation}
\label{equation:pullback}
\pi^*_k : C^\infty(M,\Sym^k \T M) \to  C^\infty(SM), \hspace{1cm} (\pi^*_k K)(v) := \tfrac1{k!}g(K,v^k).
\end{equation}
Note here that $\tfrac1{k!} v^k = v \otimes \dotsm \otimes v$, where the tensor product is repeated $k$ times.
\begin{lemma}
The linear map 
$$\pi^*:=\bigoplus_{k\ge 0}\pi_k^*:C^\infty\big(M,\Sym \T M\big)\to C^\infty(SM)$$
is an algebra homomorphism.
\end{lemma}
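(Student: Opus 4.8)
The plan is to verify directly that $\pi^*$ respects products, i.e.\ that for $K_1\in C^\infty(M,\Sym^k\T M)$ and $K_2\in C^\infty(M,\Sym^l\T M)$ one has $\pi^*_{k+l}(K_1\cdot K_2)=\pi^*_k(K_1)\cdot\pi^*_l(K_2)$ as functions on $SM$. Since both sides are pointwise expressions, it suffices to fix $x\in M$ and a unit vector $v\in S_xM$ and compare values; thus the statement is really a purely algebraic identity about the symmetric algebra $\Sym\,\T$ of a Euclidean vector space, combined with the polynomial interpretation of symmetric tensors recalled in Section~2.

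First I would record the evaluation formula in the convention of this paper. For $K\in\Sym^k\T$, viewing $K$ as the symmetric $k$-linear form $K(v_1,\dots,v_k)=g(K,v_1\cdots v_k)$, we have $g(K,v^k)=k!\,K(v,\dots,v)$ because the defining sum over $\sigma\in S_k$ in \eqref{equation:product} has $k!$ equal terms when all arguments coincide; hence $(\pi^*_kK)(v)=\tfrac1{k!}g(K,v^k)=K(v,\dots,v)$ is exactly the associated degree-$k$ polynomial evaluated at $v$. With this identification the claim becomes the assertion that the polynomial attached to $K_1\cdot K_2$ is the product of the polynomials attached to $K_1$ and $K_2$. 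So the key step is to show $g(K_1\cdot K_2,v^{k+l})=\tfrac{(k+l)!}{k!\,l!}g(K_1,v^k)\,g(K_2,v^l)$, or equivalently, in terms of multilinear forms, $(K_1\cdot K_2)(v,\dots,v)=K_1(v,\dots,v)\,K_2(v,\dots,v)$.

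To prove this I would unwind the definition of the symmetric product. The product $K_1\cdot K_2$ is, up to the combinatorial normalization of \eqref{equation:product}, the symmetrization of $K_1\otimes K_2$; evaluating the symmetrization of a tensor on $v^{\otimes(k+l)}=v\otimes\cdots\otimes v$ kills the symmetrization (every permutation fixes $v^{\otimes(k+l)}$) and simply multiplies by the number of terms. Concretely, for decomposable $K_1=u_1\cdots u_k$ and $K_2=w_1\cdots w_l$ one expands $K_1\cdot K_2$ using \eqref{equation:product} for $S_{k+l}$, pairs it against $v^{k+l}$ using the contraction rule $v\lrcorner u^m=m\,g(v,u)u^{m-1}$ iterated, and checks that the resulting scalar is $\tfrac{(k+l)!}{k!\,l!}\big(\prod_i g(v,u_i)\big)\big(\prod_j g(v,w_j)\big)=\tfrac{(k+l)!}{k!\,l!}g(K_1,v^k)g(K_2,v^l)$; by bilinearity this extends from decomposable tensors to all of $\Sym^k\T\times\Sym^l\T$. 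Dividing by $(k+l)!$ and using $\pi^*_kK=\tfrac1{k!}g(K,v^k)$ then gives $\pi^*_{k+l}(K_1\cdot K_2)=\pi^*_k(K_1)\pi^*_l(K_2)$ pointwise, and summing over degrees shows $\pi^*$ is an algebra homomorphism (it is linear by construction and clearly sends $1\in\Sym^0\T M=\RM$ to the constant function $1$).

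The main obstacle is purely bookkeeping: getting the combinatorial factor right under this paper's unnormalized convention for the symmetric product (which differs from the $\tfrac1{k!}$-normalized convention mentioned in the text after \eqref{equation:product}), and making sure the $\tfrac1{k!}$ in the definition \eqref{equation:pullback} of $\pi^*_k$ exactly cancels it. A clean way to sidestep the multinomial juggling is to phrase everything through the polynomial picture from the start: the assignment $K\mapsto\big(v\mapsto K(v,\dots,v)\big)$ from $\Sym^k\T$ to homogeneous degree-$k$ polynomials is the standard polarization isomorphism and is manifestly multiplicative once one knows the symmetric product corresponds to the product of polynomials, which is immediate from evaluating symmetrizations on diagonal arguments as above. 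I expect the verification of the normalization constants to be the only place where care is genuinely needed.
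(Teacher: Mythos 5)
Your proof is correct and follows essentially the same route as the paper's: reduce to decomposable tensors by bilinearity and check $g(K_1\cdot K_2,v^{k+l})=\tfrac{(k+l)!}{k!\,l!}\,g(K_1,v^k)\,g(K_2,v^l)$ directly from the definition of the scalar product on $\Sym^k\T$, which is exactly the paper's chain of equalities. One small caveat on your side remark: under the paper's own convention $K(v_1,\dots,v_k)=g(K,v_1\cdot\ldots\cdot v_k)$ one has $g(K,v^k)=K(v,\dots,v)$ with no extra $k!$ (so $(\pi_k^*K)(v)=\tfrac1{k!}K(v,\dots,v)$); this normalization clash affects only your ``polynomial picture'' framing, not the actual computation.
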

\begin{proof}
Using the bilinearity of the symmetric product it suffices to prove $\pi^*(a \cdot b) = (\pi^*a)(\pi^*b)$ where $a = a_1 \dotsm a_k$ and $b = b_1 \dotsm b_l$, for some $a_i, b_j \in C^\infty(M, \Sym^1 \T M)$. But this follows from
\begin{align*}
    (\pi^*a)(\pi^*b) &= \tfrac1{k!}g(a, v^k) \tfrac1{l!} g(b, v^l) = g(a_1, v) \dotsm g(a_k, v) g(b_1, v) \dotsm g(b_l, v)\\ 
    &= \tfrac1{(k + l)!} g(a \cdot b, v^{k + l}) = \pi^*(a \cdot b),
\end{align*}
which completes the proof.
\end{proof}

The following is standard and is a consequence of the identification of spherical harmonics with harmonic homogeneous polynomials (e.g. see \cite{Berger-Gauduchon-Mazet-71}, Chapter C.I).
% \red{TO DO}

\begin{lemma}
The above maps induce pointwise isomorphisms
\begin{equation}
\label{equation:pullback2}
\pi^*_k : \Sym^k_0 \T_x M \xrightarrow{\sim}  \Omega_{k}(x), 
\end{equation}
for every $x\in M$ and for every integer $k\ge 0$. 
\end{lemma}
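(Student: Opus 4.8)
The plan is to establish the pointwise isomorphism $\pi^*_k : \Sym^k_0 \T_xM \xrightarrow{\sim} \Omega_k(x)$ by combining two classical facts: first, that restriction to the unit sphere sets up an isomorphism between harmonic homogeneous polynomials of degree $k$ on $\T_xM \cong \RM^n$ and the spherical harmonics $\Omega_k(x) = \ker(\Delta_x - k(n+k-2))$; and second, that under the identification of $\Sym^k \T_xM$ with homogeneous degree-$k$ polynomials via $K \mapsto (v \mapsto \tfrac1{k!}g(K,v^k))$, the trace-free tensors $\Sym^k_0 \T_xM = \ker \Lambda$ correspond exactly to the harmonic polynomials. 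Since $\pi^*_k$ is precisely the composition of these two identifications, it is an isomorphism.

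Concretely, I would first fix $x \in M$, identify $(\T_xM, g_x)$ with $(\RM^n, g)$ via an orthonormal basis, and recall from the cited references (e.g. \cite{Berger-Gauduchon-Mazet-71}, Chapter C.I) that a homogeneous polynomial $P$ of degree $k$ on $\RM^n$ is harmonic (i.e. $\Delta_{\RM^n} P = 0$) if and only if its restriction $P|_{\Ss^{n-1}}$ lies in $\Omega_k(x)$, and that this restriction map is bijective onto $\Omega_k(x)$. Next I would verify that the Euclidean Laplacian $\Delta_{\RM^n}$, transported to $\Sym^k \T_xM$ through the identification \eqref{equation:pullback}, is (up to a nonzero constant) the trace map $\Lambda : \Sym^k \T_xM \to \Sym^{k-2}\T_xM$. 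This is the computation $\Delta_{\RM^n}(v \mapsto \tfrac1{k!}g(K,v^k)) = c_{n,k}\, (v \mapsto \tfrac1{(k-2)!}g(\Lambda K, v^{k-2}))$ for some constant $c_{n,k} \neq 0$, which follows from $\sum_i \e_i \lrcorner \e_i \lrcorner$ being the polynomial Laplacian under the dictionary "$v \lrcorner$ = directional derivative $\partial_v$" recorded right after the Euler-formula display in Section 2.1. Hence $K \in \Sym^k_0 \T_xM = \ker \Lambda$ if and only if $\pi^*_k K$ extends to a harmonic homogeneous polynomial, i.e. if and only if $\pi^*_k K \in \Omega_k(x)$.

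Finally, injectivity of $\pi^*_k$ on $\Sym^k_0 \T_xM$ is clear since $\pi^*_k$ is already injective on all of $\Sym^k \T_xM$ (a symmetric tensor is determined by the polynomial $v \mapsto g(K,v^k)$ by polarization), and surjectivity follows because $\dim \Sym^k_0 \T_xM = \dim \Omega_k(x)$ — both equal the dimension of the space of degree-$k$ harmonic polynomials on $\RM^n$ — together with the fact, just established, that $\pi^*_k$ maps $\Sym^k_0 \T_xM$ into $\Omega_k(x)$. Alternatively one invokes the standard decomposition $\Sym^k \T_xM \cong \Sym^k_0 \T_xM \oplus \Sym^{k-2}_0 \T_xM \oplus \cdots$ recalled in Section 2.1, which matches the decomposition of degree-$k$ polynomials into $\bigoplus_j |v|^{2j} \cdot (\text{harmonic of degree } k-2j)$, so that the trace-free summand corresponds precisely to the harmonic summand whose restriction is $\Omega_k(x)$.

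The only mild subtlety — hardly an obstacle — is keeping the conventions straight: the paper's symmetric product \eqref{equation:product} is not divided by $k!$, so one must be careful that $\tfrac1{k!}v^k$ really equals the pure tensor $v\otimes\cdots\otimes v$ (as the paper notes right after \eqref{equation:pullback}) and track the resulting combinatorial constant $c_{n,k}$ when identifying $\Lambda$ with the Euclidean Laplacian; since $c_{n,k}$ is manifestly nonzero, it does not affect the kernels and hence does not affect the isomorphism statement. Everything else is a direct appeal to the classical theory of spherical harmonics.
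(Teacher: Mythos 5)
Your proposal is correct and is exactly the standard argument the paper itself invokes (it gives no proof beyond citing \cite{Berger-Gauduchon-Mazet-71}, Chapter C.I): identify $\Lambda$ with the Euclidean Laplacian via the dictionary ``$v\lrcorner = \partial_v$'' so that $\Sym^k_0\T_xM$ corresponds to harmonic homogeneous polynomials, then restrict to the unit sphere. Your handling of injectivity (polarization), surjectivity (dimension count or the standard decomposition), and the harmless normalization constants is all sound.
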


If $E$ is any vector bundle over $M$ and $\mc{E}$ is its pull-back to $SM$, the spaces of sections $C^\infty(M,\Sym \T M\otimes E)$ and $C^\infty(SM,\mc{E})$ are modules over the algebras $C^\infty(M,\Sym \T M)$ and $C^\infty(SM)$ respectively, and 
we can extend the linear maps above to linear maps
\begin{equation}
\label{equation:pullbackE}
\pi^*_k : C^\infty(M,\Sym^k \T M\otimes E) \to  C^\infty(SM,\mc{E}), \qquad  \pi^*_k (K\otimes\xi)(v) :=\pi_k^*(K)\pi^*\xi
\end{equation}
compatible with the module structures in sense that 
\begin{equation}\label{product}\pi_k^*(K)\cdot\pi_l^*(K'\otimes \xi)=\pi_{k+l}^*((K\cdot K')\otimes \xi)
\end{equation}
 for every $K\in C^\infty(M,\Sym^k \T M)$, $K'\in C^\infty(M,\Sym^l \T M)$ and $\xi\in C^\infty(M,E)$. In particular, since 
 $$\pi_2^*(\LL)(v)=\tfrac12g(\LL,v\cdot v)=\tfrac12g(v\lrcorner\LL, v)=\tfrac12g(2v, v)=1,\qquad\forall v\in SM,$$
 we have $\pi_{k+2}^*(\LL K)=\pi_k^*(K)$ for every  $K\in C^\infty(M,\Sym^k \T M)$.

We now relate the operators $\X$, $\X_+$ and $\X_-$ with the operators
$\dd$, $\dd_0$ and $\dd_0^*$ defined in Section \ref{confKill}. 

\begin{lemma}
\label{lemma:link-d}
The following relation holds on sections of $\Sym^k\T M \otimes E$:
\begin{equation}
\label{27} \X \pi_k^* =\pi_{k+1}^* \dd,
\end{equation}
while on sections of $\Sym^k_0\T M \otimes E$ we have:
\begin{eqnarray}
\label{28} \X_+ \pi_k^*=&\pi_{k+1}^*  \dd_0,\\
\label{29} \X_- \pi_k^*=&-\tfrac1{n+2k-2}\pi_{k-1}^*  \dd_0^*.
\end{eqnarray}
\end{lemma}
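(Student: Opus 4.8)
The plan is to establish the first identity \eqref{27} by a direct pointwise computation, and then deduce the two refined identities \eqref{28} and \eqref{29} by projecting onto the appropriate spherical harmonic components. First I would prove \eqref{27}. Since both sides are $C^\infty(M)$-linear and both $\X$ and $\dd$ satisfy a Leibniz rule with respect to the module structure described around \eqref{product}, it suffices to treat a decomposable section $K\otimes\xi$ with $K$ of the form $\tfrac{1}{k!}a^k$ for $a\in C^\infty(M,\T M)$ (or, more pedantically, a product of $1$-forms — the general case follows by polarization and $C^\infty(M)$-bilinearity, exactly as in the proof that $\pi^*$ is an algebra homomorphism). Then $\pi_k^*(K\otimes\xi)(v)=g(a,v)^k\,\pi^*\xi(v)$, and I compute $\X=\nabla^{\mc E}_X$ of this using that $X$ at $v\in SM$ differentiates along the geodesic with initial velocity $v$, so $X(g(a,v))=g(\nabla_v a,v)$ and $\nabla^{\mc E}_X(\pi^*\xi)=\pi^*(\nabla^E_v\xi)$ (both following from the definition of the connection map $\mc K$ and the fact that $\dot v=0$ along a geodesic). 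This yields $\X\pi_k^*(K\otimes\xi)(v)=k\,g(\nabla_v a,v)g(a,v)^{k-1}\pi^*\xi(v)+g(a,v)^k\pi^*(\nabla^E_v\xi)(v)$, which I then match term by term against $\pi_{k+1}^*(\dd(K\otimes\xi))(v)$ using the formula \eqref{dk} for $\dd$ on the untwisted factor — namely $g(\dd K,X^{k+1})=(k+1)g(\nabla_X K,X^k)$ — and the definition of $\dd$ on twisted sections via $\sum_i(\e_i\cdot K)\otimes\nabla^E_{\e_i}\xi$, noting that $\pi_{k+1}^*(\e_i\cdot K)(v)$ contracted against the sum reproduces $g(a,v)^k\pi^*(\nabla^E_v\xi)(v)$.

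Next I would deduce \eqref{28} and \eqref{29}. Restricting \eqref{27} to sections of $\Sym^k_0\T M\otimes E$, the left side $\X\pi_k^*$ lands in $C^\infty(M,\Omega_{k-1}\otimes E)\oplus C^\infty(M,\Omega_{k+1}\otimes E)$ by \eqref{eq:XX}, with the two graded pieces being $\X_-\pi_k^*$ and $\X_+\pi_k^*$. On the right side, I decompose $\dd K$ using the standard decomposition: for $K\in C^\infty(M,\Sym^k_0\T M\otimes E)$ one has $\dd K=\dd_0 K-\tfrac{1}{n+2k-2}\LL\,\dd^* K$, which follows from the identity $\dd_0 K=\dd K+\tfrac{1}{n+2k-2}\LL\,\dd^* K$ stated just after the definition of $\dd_0$ (and which extends verbatim to the twisted setting since $\dd_0(K\otimes\xi)=\dd_0 K\otimes\xi+\sum_i(\e_i\cdot K)_0\otimes\nabla^E_{\e_i}\xi$ and $\dd^*$ is unchanged on the $E$-factor at a parallel point). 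Here $\dd_0 K\in C^\infty(M,\Sym^{k+1}_0\T M\otimes E)$ and $\dd^* K\in C^\infty(M,\Sym^{k-1}_0\T M\otimes E)$ are the trace-free pieces. Applying $\pi_{k+1}^*$ and using that $\pi_{k+2}^*(\LL(\cdot))=\pi_k^*(\cdot)$ together with the pointwise isomorphisms \eqref{equation:pullback2} identifying $\Sym^{j}_0\T M$ with $\Omega_j$, I get $\pi_{k+1}^*\dd K=\pi_{k+1}^*\dd_0 K-\tfrac{1}{n+2k-2}\pi_{k-1}^*\dd^* K$, where the first term is a section of $\Omega_{k+1}\otimes E$ and the second of $\Omega_{k-1}\otimes E$. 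Matching graded pieces gives $\X_+\pi_k^*=\pi_{k+1}^*\dd_0$ and $\X_-\pi_k^*=-\tfrac{1}{n+2k-2}\pi_{k-1}^*\dd^*$; since $\dd_0^*$ is precisely the restriction of $\dd^*$ to trace-free sections, this is \eqref{29}.

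The main obstacle, as usual with these unit-tangent-bundle computations, is bookkeeping the identification of the various bundles: $\mc N\cong\HH\cong\V$, the passage between $\nabla^{\mc E}$ restricted to the $X$-direction and the base connection $\nabla^E$, and the precise normalization in \eqref{equation:pullback} (the factor $\tfrac1{k!}$, which is exactly what makes $\pi^*$ an algebra homomorphism and makes $\pi_2^*(\LL)=1$). I expect the cleanest route is to do everything at a point $x\in M$ with a frame parallel at $x$, to reduce to monomials $g(a,v)^k$, and to let the Leibniz rules for $\X$ and $\dd$ do the rest; the twisted terms then add only a single extra summand on each side that visibly corresponds under $\pi^*$. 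No delicate analysis is needed — the ellipticity and finite-dimensional kernel statements from Section \ref{ssection:fourier} are not used here; only the purely algebraic splitting of $\T M\otimes\Sym^k_0\T M$ and the definition of $\X_\pm$ as graded projections enter.
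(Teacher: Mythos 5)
Your proposal is correct and follows essentially the same route as the paper: prove \eqref{27} on decomposable sections by differentiating $\pi_k^*K$ along the geodesic flow (using $\nabla_{\dot\gamma}\dot\gamma=0$ and formula \eqref{dk}) with the pullback connection supplying the twisted term, then obtain \eqref{28}--\eqref{29} by writing $\dd\Psi=\dd_0\Psi-\tfrac{1}{n+2k-2}\LL\dd_0^*\Psi$ via \eqref{projection}, applying $\pi_{k+1}^*$, using $\pi_{k+2}^*\circ\LL=\pi_k^*$, and matching the $\Omega_{k\pm1}$ components. The only cosmetic difference is your further reduction to monomials $a^k$ by polarization, which the paper avoids by computing $\tfrac{d}{dt}g(K,\dot\gamma_t^k)$ directly for general $K$.
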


\begin{proof} For the first equation, it is enough to check it on decomposable sections $\Psi=K\otimes \xi$, with $K\in  C^\infty(M,\Sym^k \T M)$ and $\xi\in  C^\infty(M,E)$. Then 
$$\X\pi_k^*\Psi=\nabla^{\mc{E}}_X(\pi_k^*(K)\pi^*\xi)=X(\pi_k^*(K))\pi^*\xi+\pi_k^*(K)\pi^*(\nabla^{E}_X\xi)$$
and 
\begin{eqnarray*}\pi_{k+1}^* \dd \Psi&=&\pi_{k+1}^*(\dd K\otimes\xi+\e_i\cdot K\otimes\nabla^E_{\e_i}\xi)=\pi_{k+1}^*(\dd K)\pi^*\xi+g(\e_i,v)\pi_k^*(K)\nabla^{\mc{E}}_{\e_i}\pi^*\xi\\
&=&\pi_{k+1}^*(\dd K)\pi^*\xi+\pi_k^*(K)\nabla^{\mc{E}}_{X}\pi^*\xi=\pi_{k+1}^*(\dd K)\pi^*\xi+\pi_k^*(K)\pi^*(\nabla^E_{v}\xi),
\end{eqnarray*}
where we identified $\e_i$ with their horizontal lifts to $SM$ and used that $d\pi(X) = v$. It remains to prove that $X(\pi_k^*(K))=\pi_{k+1}^*(\dd K)$. Let $v\in SM$ be any vector and denote by $x:=\pi(v)$. The geodesic in $M$ determined by $(x,v)$ will be denoted by $\gamma_t$. Then the integral curve of $X$ through $v$ is $\dot\gamma_t$. We can thus compute
%=\frac1{k!}\dot\gamma_0(g(K,\dot\gamma_t^k))
\begin{eqnarray*}
X(\pi_k^*(K))(v)&=&\frac{d}{dt}\bigg|_{t=0}\pi_k^*(K)(\dot\gamma_t)=\frac1{k!}\frac{d}{dt}\bigg|_{t=0}g(K,\dot\gamma_t^k)\\&=&\frac1{k!}g(\nabla_{\dot\gamma_0}K,\dot\gamma_0^k)
\stackrel{\eqref{dk}}{=}\frac1{(k+1)!}g(\dd K,v^{k+1})=\pi_{k+1}^*(\dd K)(v),
\end{eqnarray*}
where in the third equality we used that $\nabla_{\dot\gamma_0} \dot \gamma_0 = 0$. This proves \eqref{27}. Using this equation applied to some twisted trace-free symmetric tensor $\Psi\in C^\infty(M, \Sym^k_0\T M \otimes E)$ together with \eqref{projection} we then obtain
%\eqref{dd0}
\begin{eqnarray*}\X_+ \pi_k^*\Psi+\X_- \pi_k^*\Psi&=&\pi_{k+1}^*\dd \Psi=\pi_{k+1}^*\left( \dd_0 (\Psi)-\tfrac{1}{n+2k-2}\LL\dd_0^*(\Psi)\right)\\&=&\pi_{k+1}^*( \dd_0 (\Psi))-\tfrac{1}{n+2k-2}\pi_{k-1}^*( \dd_0^* (\Psi)).
\end{eqnarray*}
Comparing the components in $\Omega_{k+1}\otimes E$ and $\Omega_{k-1}\otimes E$ yields \eqref{28}--\eqref{29} at once.
\end{proof}

Consider now the operator $\nabla_\V:C^\infty(SM,\mc{E})\to C^\infty(SM,\mc{N}\otimes\mc{E})\subset C^\infty(SM,\pi^*(\T M)\otimes\mc{E})$ and its formal adjoint $\nabla_\V^*:C^\infty(SM,\pi^*(\T M)\otimes\mc{E})\to C^\infty(SM,\mc{E})$. Define the bundle map
$$
    S_k:\Sym^k\T M\otimes E\to \Sym^{k-1}\T M\otimes(E\otimes \T M), \quad S_k(K\otimes \xi) := \sum_i(\e_i\lrcorner K)\otimes(\xi\otimes\e_i),
$$
where $(\e_i)$ is some local orthonormal frame of $\T M$. Let $\pi_{\mc{N}}:\pi^*\T M\to\mc{N}$ be the orthogonal projection. By definition, for every section $K\otimes \xi$ of $\Sym^k\T M\otimes E$ and at any $v\in SM$ we have:
\begin{eqnarray*}
\pi_{k-1}^*S_k(K\otimes \xi)&=&\pi_{\mc{N}}\pi_{k-1}^*S_k(K\otimes \xi)+\sum_i\frac1{(k-1)!}g(\e_i\lrcorner K,v^{k-1})\,(g(\e_i,v)v\otimes\xi)\\
&=&\pi_{\mc{N}}\pi_{k-1}^*S_k(K\otimes \xi)+k\pi_{k}^*(K\otimes\xi)\otimes v,
\end{eqnarray*}
thus showing that for every $\Psi\in  C^\infty(M,\Sym^k\T M\otimes E)$, 
\begin{equation}
\label{kp} \pi_{k-1}^*S_k\Psi=\pi_{\mc{N}}\pi_{k-1}^*S_k\Psi+k\,\pi_{k}^*\Psi\otimes v.
\end{equation}

It is possible to give a formula relating $S_k$ and $\nabla_{\V}$:

\begin{lemma}
\label{lemma:link-v}
The following relation holds for sections of $\Sym^k_0 \T M\otimes E$:
\begin{equation}
\label{30} \nabla_\V \pi_k^*=\pi_{\mc{N}}\pi_{k-1}^* S_k.
\end{equation}
Moreover, for every $K\otimes\xi\in  C^\infty(M,\Sym^k_0 \T M\otimes E)$, and $w\in C^\infty(M,\T M)$,
\begin{equation}
\label{31} \nabla_\V^* \pi_k^* (K\otimes (w\otimes \xi))=-\pi_{k-1}^* ((w\lrcorner K)\otimes \xi)+k\pi_{k+1}^*((w\cdot K)\otimes \xi).
\end{equation}
\end{lemma}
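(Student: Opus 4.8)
The plan is to reduce everything to the vertical Laplacian. First I would establish \eqref{30}. The key observation is that $\nabla_\V$ is, fiberwise, nothing but the tangential derivative along the unit sphere $S_xM$ of the function $v\mapsto \pi_k^*(K\otimes\xi)(v)=\tfrac1{k!}g(K,v^k)\,\xi$. Extending $\pi_k^*(K\otimes\xi)$ to a function homogeneous of degree $k$ on $T_xM\setminus\{0\}$ (indeed the formula $v\mapsto\tfrac1{k!}g(K,v^k)$ is already such an extension), its \emph{Euclidean} gradient at $v$ is $\sum_i\bigl(\partial_{\e_i}\tfrac1{k!}g(K,v^k)\bigr)\e_i = \sum_i \tfrac1{(k-1)!}g(\e_i\lrcorner K,v^{k-1})\,\e_i$, which under the identification of polynomials with symmetric tensors is exactly $\pi_{k-1}^*S_k(K\otimes\xi)$ (cf. the Euler-type identity $v\lrcorner K=\partial_v K$ recalled in Section 2). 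Projecting the Euclidean gradient onto the tangent space $v^\perp=\mc{N}_v$ of the sphere gives the intrinsic spherical gradient, i.e. $\nabla_\V$; hence $\nabla_\V\pi_k^*(K\otimes\xi)=\pi_{\mc{N}}\pi_{k-1}^*S_k(K\otimes\xi)$. The twisted factor $\xi$ is carried along untouched since $\nabla^{\mc{E}}$ restricted to the vertical directions only differentiates the spherical-harmonic part (the pullback connection is constant along fibers), and this already incorporates the definition of $\nabla_\V$ via $\mc{K}$ applied to the vertical part of $\nabla^{\mc{E}}f$. One small point to check is the normalization: that the identification $d\pi:\HH\to\mc{N}$, $\mc{K}:\V\to\mc{N}$ together with the Sasaki metric makes the intrinsic round-sphere gradient agree with the projected Euclidean gradient without extra constants; this is where the factor conventions in \eqref{equation:pullback} (division by $k!$, and $\tfrac1{k!}v^k=v\otimes\cdots\otimes v$) must be used carefully.

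For \eqref{31} I would argue by duality from \eqref{30}, but the twisting sits on the target bundle $\mc{N}\otimes\mc{E}$, so one must be a little careful. Take $K\otimes(w\otimes\xi)\in C^\infty(M,\Sym^k_0\T M\otimes(\T M\otimes E))$; I want to compute $\nabla_\V^*\pi_k^*(K\otimes(w\otimes\xi))$ by pairing against an arbitrary test section $h\in C^\infty(SM,\mc{E})$ and using $\langle\nabla_\V^*(\cdot),h\rangle_{L^2(SM)}=\langle\cdot,\nabla_\V h\rangle_{L^2(SM)}$. Now $\nabla_\V h$ lands in $C^\infty(SM,\mc{N}\otimes\mc{E})$, so only the $\mc{N}$-component of $\pi_k^*(K\otimes(w\otimes\xi))=\pi_k^*(K)\,(\pi^*w\otimes\pi^*\xi)$ contributes. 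Writing $\pi^*w=\pi_{\mc{N}}\pi^*w+g(w,v)v$, the component along $v$ is annihilated by the pairing; the term $g(w,v)v\otimes\xi$ would, however, have been killed anyway, so the cleanest route is: use \eqref{kp} with $k$ replaced appropriately to relate $\pi_{k}^*\Psi\otimes v$ to $\pi_{\mc N}$-projected quantities, then recognize the two pieces. Concretely, I expect the identity to follow by observing that $\nabla_\V^*$ acting on $\pi_k^*(K)\,\pi^*w\otimes\pi^*\xi$ splits according to the orthogonal decomposition $\pi^*w=\pi_{\mc N}\pi^*w+g(w,v)\,v$: the $\pi_{\mc N}\pi^*w$ part is handled by the formal adjoint of \eqref{30} — the adjoint of $\pi_{\mc N}\pi_{k-1}^*S_k$ — and working out that adjoint using the explicit $S_k$ together with the adjunction $(v\cdot)^*=v\lrcorner$ produces the two terms $-\pi_{k-1}^*((w\lrcorner K)\otimes\xi)$ and $k\,\pi_{k+1}^*((w\cdot K)\otimes\xi)$ after accounting for the $S_xM$-volume and the factor $n+2k-2$ hidden in how $\mc N$-valued pairings relate to the full $\pi^*\T M$-valued ones.

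An alternative, perhaps more robust, route to \eqref{31} is a direct pointwise computation: express $\nabla_\V^*$ as $-\sum_i(\e_i\lrcorner)\nabla_{\V,\e_i}$ in a local vertical frame (mirroring the formula $\nabla^*=-\sum_i(\e_i\lrcorner\otimes\id)\nabla_{\e_i}$ derived in the proof of Lemma \ref{bq}, but now for the sphere), apply \eqref{30} to reduce $\nabla_{\V}$ of the scalar factor $\pi_k^*(K)$ to $\pi_{\mc N}\pi_{k-1}^*S_k(K)$, and then contract. The bookkeeping amounts to repeatedly using $v\lrcorner K=\partial_v K$, the projection formula \eqref{projection} (to keep things trace-free where needed — though here $K$ is trace-free and $w\cdot K,\ w\lrcorner K$ need not be, which is fine since the target $\Sym^{k\pm1}\T M\otimes E$ is not required to be trace-free in the statement), and $\pi_{k+2}^*(\LL K')=\pi_k^*(K')$. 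I expect \textbf{the main obstacle} to be precisely this constant-chasing in \eqref{31}: making sure the coefficient of $\pi_{k+1}^*((w\cdot K)\otimes\xi)$ comes out exactly $k$ and not, say, $k/(n+2k-1)$ or similar, which requires being scrupulous about (i) the $k!$-normalization in $\pi_k^*$, (ii) the round-sphere volume normalization implicit in $\nabla_\V^*$, and (iii) the contribution of the radial component $g(w,v)v$ that \eqref{kp} was set up to handle. The geometric content of \eqref{30} itself is essentially immediate once one recognizes $\nabla_\V$ as the tangential part of the Euclidean gradient; it is \eqref{31} that demands care.
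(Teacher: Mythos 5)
Your proposal follows essentially the same route as the paper: \eqref{30} is obtained by computing the fiberwise tangential derivative of the degree-$k$ homogeneous polynomial $v\mapsto\tfrac1{k!}g(K,v^k)$, and \eqref{31} by your ``alternative'' direct route, writing $\nabla_\V^*=-\sum_i\mathbf{f}_i\lrcorner\nabla^{\mc E}_{\mathbf{f}_i}$ in a vertical orthonormal frame and reducing to the derivative formula from \eqref{30}. The constant you were worried about resolves cleanly and involves no sphere-volume or $n+2k-2$ normalization, because the adjoint is computed pointwise rather than by fiber integration: the computation collapses to $-w^\perp(\pi_k^*(K))\,\pi^*\xi$ with $w^\perp=w-g(w,v)v$, and since $g(v\lrcorner K,v^{k-1})=g(K,v\cdot v^{k-1})=g(K,v^k)$, the radial piece contributes $\tfrac{1}{(k-1)!}g(w,v)\,g(K,v^{k})\,\pi^*\xi=k\,\pi_1^*(w)\,\pi_k^*(K)\,\pi^*\xi=k\,\pi_{k+1}^*((w\cdot K)\otimes \xi)$, the factor $k$ being exactly $k!/(k-1)!$ (Euler's identity in the normalization of \eqref{equation:pullback}).
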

\begin{proof} Let $v, w\in S_xM$ with $w \perp v$. We denote by $v_t:=\cos t\,v+\sin t\, w$ the curve in $S_xM$ which satisfies $v_0=v$ and $\dot v_0=w$. We then compute 
\begin{equation}\label{deriv}
\begin{split}
    w(\pi_k^*(K))&=\tfrac{d}{dt}\big|_{t=0} \pi_k^*(K)(v_t) = \tfrac1{k!}\tfrac{d}{dt}\big|_{t=0}g(K,v_t^k) = \tfrac1{(k-1)!}g(K,w\cdot v^{k-1})\\
    &=\tfrac1{(k-1)!}g(w\lrcorner K,v^{k-1}),
\end{split}
\end{equation}
whence for $\Psi := K \otimes \xi$ we have
$$ \nabla_\V \pi_k^*(\Psi)(w)=\nabla^{\mc{E}}_w(\pi_k^*(K)\pi^*\xi)=w(\pi_k^*(K))\pi^*\xi=\tfrac1{(k-1)!}g(w\lrcorner K,v^{k-1})\pi^*\xi,
$$
where we identified $w$ with its vertical lift. Then, computing the right hand side at the point $v$ yields
\begin{eqnarray*}\pi_{\mc{N}}\pi_{k-1}^* S_k (\Psi)(w)&=&\pi_{k-1}^*  \left(\sum_i(\e_i\lrcorner K)\otimes(\e_i\otimes\xi)\right)(w) = \sum_i \pi_{k-1}^*(\e_i\lrcorner K) g(\e_i,w)\pi^*\xi\\
&=&\tfrac1{(k-1)!}\sum_i g(\e_i\lrcorner K, v^{k-1}) g(\e_i,w)\pi^*\xi=\tfrac1{(k-1)!}g(w\lrcorner K, v^{k-1})\pi^*\xi,
\end{eqnarray*}
thus proving \eqref{30}.

We now remark that since $SM\to M$ is a Riemannian submersion, the formal adjoint of the operator $\nabla_\V$ can be written as
$$
    \nabla_\V^*(\sigma\otimes\psi)=-\sum_i \mathbf{f}_i\lrcorner \nabla^{\mc E}_{\mathbf{f}_i}(\sigma\otimes\psi)
$$
for all sections $\sigma\in C^\infty(SM,\pi^*\T M)$, and $\psi\in C^\infty(SM,\mc{E})$, where $(\mathbf{f}_i)$ denotes a local orthonormal frame of $\V\subset \T(SM)$ and the interior product is taken with respect to the bilinear form $\V\otimes \pi^*\T M\to \RM$ determined by the metric $g$, after identification of $\V_v$ with the orthogonal complement of $v$ in $\pi^*(\T M)_v$ for every $v\in SM$. We then denote by $w^\bot := w-g(w,v)v\in\V_v$ at some $v\in SM$ and compute:
\begin{equation*}
\begin{split}\nabla_\V^* \pi_k^* (K\otimes (w\otimes \xi))&=-\sum_i \mathbf{f}_i\lrcorner \nabla^{\mc E}_{\mathbf{f}_i}\big(\pi_k^* (K\otimes (w\otimes \xi))\big)=-\sum_i \mathbf{f}_i\lrcorner\big( \mathbf{f}_i(\pi_k^* (K))\pi^*(w\otimes \xi)\big)\\
&=-w^\bot(\pi_k^* (K))\pi^*(\xi)\stackrel{\eqref{deriv}}{=}-\tfrac1{(k-1)!}g(w^\bot\lrcorner K,v^{k-1})\pi^*(\xi)\\
&=-\pi_{k-1}^*(w\lrcorner K)\pi^*(\xi)+\tfrac1{(k-1)!}g(K,v^{k})\pi_1^*(w)\pi^*(\xi)\\
&=-\pi_{k-1}^* ((w\lrcorner K)\otimes \xi)+k\pi_{k+1}^*((w\cdot K)\otimes \xi) .
\end{split}
\end{equation*}
\end{proof}

Finally, we compute the action of the operator $P_3^{E}$ pulled back to the unit sphere bundle.

\begin{lemma}
For every $\Psi\in  C^\infty(M,\Sym^k_0 \T M\otimes E)$, and $w\in C^\infty(M,\T M)$, 
\begin{equation}
\label{equation:z}
Z_k \pi_k^* \Psi=\pi_k^*P^E_3 \Psi,\qquad Z_k^*  \pi_k^*(w\otimes\Psi)=\pi_k^*((P^E_3)^*(w\otimes\Psi))
\end{equation}
where  $Z_k:C^\infty(M,\Omega_k \otimes E)\to C^\infty(SM,\mc{N}\otimes \mc{E})\subset C^\infty(SM,\pi^* \T M \otimes\mc{E})$ is the operator defined by
\begin{equation}\label{eq:Z}
    Z_k f:=\nabla_\HH f-\tfrac1{k+1}\nabla_\V\X_+f+\tfrac{1}{n+k-3}\nabla_\V\X_-f
\end{equation}
and
$$P_3^E:C^\infty(M,\Sym^k_0\T M\otimes E)\to C^\infty(M,\Sym^k_0\T M\otimes(\T M\otimes E))$$ is the first order differential operator appearing in \eqref{pi}.
\end{lemma}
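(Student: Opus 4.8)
The plan is to verify the two identities in \eqref{equation:z} by unwinding both sides explicitly in terms of the operators $\dd_0$, $\dd_0^*$, $\nabla_\V$, $\nabla_\HH$ and the pullback maps $\pi^*_k$, using the dictionary already established in Lemmas \ref{lemma:link-d} and \ref{lemma:link-v}. For the first identity, recall that $P_3^E = \p_3^E\nabla^E = (\id - \p_1^E - \p_2^E)\nabla^E$, so that on a trace-free symmetric tensor $\Psi$ one has $P_3^E\Psi = \nabla^E\Psi - \p_1^E\nabla^E\Psi - \p_2^E\nabla^E\Psi$. Now $\nabla^E\Psi$ is a section of $\T M\otimes\Sym^k_0\T M\otimes E$; under the identification $\T M\otimes\Sym^k_0\T M\cong\Sym^{k+1}_0\T M\oplus\Sym^{k-1}_0\T M\oplus\Sym^{k,1}\T M$ the three components are respectively $\tfrac1{k+1}\q_1^*\dd_0\Psi$, (a multiple of) $\q_2^*\dd_0^*\Psi$, and the $\Sym^{k,1}$-part. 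The key computation is to push this through $\pi_k^*$; here one must check that $\pi_k^*$ intertwines the inclusion $\mc N\otimes\mc E\hookrightarrow\pi^*\T M\otimes\mc E$ of the $\Sym^{k,1}$-bundle with the geometric projection $\pi_{\mc N}$ on $SM$, and that $\nabla_\HH$ corresponds to the ``total'' covariant derivative $\nabla^E$ (more precisely its $\mc N$-component), much as $\X=\nabla^{\mc E}_X$ corresponds to $\dd$ in \eqref{27}.

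Concretely, I would first establish an analogue of \eqref{27}--\eqref{30} for the horizontal derivative, namely $\nabla_\HH\pi_k^* = \pi_{\mc N}\circ(\text{pullback of }\nabla^E)$, so that $\nabla_\HH\pi_k^*\Psi$ is the full pullback of $\nabla^E\Psi$ composed with $\pi_{\mc N}$. Then the terms $\tfrac1{k+1}\nabla_\V\X_+$ and $\tfrac1{n+k-3}\nabla_\V\X_-$ in \eqref{eq:Z} should, via \eqref{28}, \eqref{29} and \eqref{30} together with \eqref{kp}, match $\pi_{\mc N}$ applied to the pullback of the $\Sym^{k+1}_0$- and $\Sym^{k-1}_0$-parts of $\nabla^E\Psi$; the normalizing constants $\tfrac1{k+1}$ and $\tfrac1{n+k-3}$ are exactly the ones produced by $\q_1\q_1^* = (k+1)\,\Id$ and the computation $\q_2\q_2^* = \tfrac{(n+2k-2)(n+k-3)}{n+2k-4}\,\id$ combined with the factor $-\tfrac1{n+2k-2}$ appearing in \eqref{29}. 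Subtracting, $Z_k\pi_k^*\Psi$ is $\pi_{\mc N}$ of the pullback of the $\Sym^{k,1}$-component $\p_3^E\nabla^E\Psi = P_3^E\Psi$, which, being already a section of the $\Sym^{k,1}$-bundle $\cong\mc N\otimes(\text{something})$, satisfies $\pi_{\mc N}\pi_{k}^*(P_3^E\Psi) = \pi_k^*(P_3^E\Psi)$; this gives the first equation.

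For the second equation in \eqref{equation:z}, the cleanest route is duality: since $Z_k^*$ is by definition the formal $L^2(SM)$-adjoint of $Z_k$ and $(P_3^E)^*$ the formal $L^2(M)$-adjoint of $P_3^E$, and since each $\pi_k^*$ is (up to the universal positive constant coming from integration over the fibre $S^{n-1}$, which is the same in every degree) an isometry onto its image by \eqref{equation:pullback2}, the adjoint identity follows formally from the first one provided the pullback maps are compatible with the inner products on both $\Sym^k_0\T M\otimes E$ and $\T M\otimes\Sym^k_0\T M\otimes E$. Thus I would take inner products $\langle Z_k\pi_k^*\Psi,\pi_k^*(w\otimes\Phi)\rangle_{L^2(SM)}$ for arbitrary $\Phi$, rewrite the left side using the first identity and the isometry property, and read off $\langle\pi_k^*\Psi,Z_k^*\pi_k^*(w\otimes\Phi)\rangle = \langle\pi_k^*\Psi,\pi_k^*((P_3^E)^*(w\otimes\Phi))\rangle$, then cancel $\pi_k^*$ by injectivity. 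Alternatively one can expand $Z_k^*$ directly using the three adjoints $\nabla_\HH^*$, $\nabla_\V^*$, $\X_+^*=-\X_-$ and the formulas \eqref{31}, \eqref{28}, \eqref{29}, but that is longer.

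The main obstacle I anticipate is bookkeeping the various normalization constants and, relatedly, proving cleanly the ``horizontal analogue'' $\nabla_\HH\pi_k^* = \pi_{\mc N}\circ\pi_{k-1}^*\circ(\text{a }\T M\text{-valued version of }\nabla)$: unlike the vertical and geodesic directions, differentiating $\pi_k^*K$ in a horizontal direction brings in the Levi-Civita connection through the horizontal lift, and one has to check carefully that no curvature/connection-form correction terms survive beyond what is encoded in $\nabla^E\Psi$ itself. Once that intertwining relation is in hand, the rest is a matter of matching the three summands of \eqref{deco} against the three terms of \eqref{eq:Z} and tracking the scalars $k+1$, $n+2k-2$, $n+k-3$, $n+2k-4$ through $\q_1\q_1^*$ and $\q_2\q_2^*$.
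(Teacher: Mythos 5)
Your proposal follows essentially the same route as the paper: decompose $P_3^E=\nabla^E-P_1^E-P_2^E$, pull each term back through $\pi_k^*$ using Lemmas \ref{lemma:link-d} and \ref{lemma:link-v} together with the Riemannian-submersion identity $\pi_k^*(\nabla^E\Psi)=\nabla_\HH\pi_k^*\Psi+\X\pi_k^*\Psi\otimes v$ (which is exactly how the paper handles the ``horizontal obstacle'' you flag), track the constants coming from $\q_1\q_1^*$ and $\q_2\q_2^*$, and deduce the second identity by passing to adjoints. One minor correction: the fibre-integration constant relating the $L^2(S^{n-1})$ and $\Sym^k_0$ inner products does depend on the degree $k$, contrary to your parenthetical, but this is harmless in your duality argument since all pairings involved there are between objects of the same degree $k$.
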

\begin{proof}
   It is enough to check the first relation, the second following by taking the metric adjoints. By definition we have $P^E_3=\nabla^E-P_1^E-P_2^E$.   
   Let us first explicit the last two operators. Using \eqref{pi1}--\eqref{pi11} we compute
   $$P_1^E\Psi=\frac1{k+1}(\q_1^E)^*\q_1^E(\nabla^E\Psi)=\frac1{k+1}\sum_i(\e_i\lrcorner \dd_0\Psi)\otimes\e_i=\frac1{k+1}S_{k+1}\dd_0\Psi.$$
   From \eqref{28}, \eqref{kp} and \eqref{30} we thus get at any $v\in SM$:
   \begin{equation}\label{p1}
       \pi_k^*P_1^E\Psi=\frac1{k+1}\pi_{\mc{N}}\pi_k^*S_{k+1}\dd_0\Psi+\pi_{k+1}^*\dd_0\Psi\otimes v=\frac1{k+1}\nabla_\V\X_+\pi_k^*\Psi+\X_+\pi_k^*\Psi\otimes v.
   \end{equation}
   Similarly, from \eqref{q10}--\eqref{pi2} we obtain
   \begin{eqnarray*}
       P_2^E\Psi&=&\tfrac{n+2k-4}{(n+2k-2)(n+k-3)} (\q_2^E)^* \q_2^E(\nabla^E\Psi)=-\tfrac{n+2k-4}{(n+2k-2)(n+k-3)} (\q_2^E)^* \dd_0^*\Psi\\
       &=&-\tfrac{n+2k-4}{(n+2k-2)(n+k-3)} \sum_i\left((\e_i\cdot \dd_0^*\Psi)\otimes\e_i-\tfrac{1}{n+2k-4} \LL(\e_i\lrcorner \dd_0^*\Psi)\otimes\e_i\right).
        \end{eqnarray*}
       Applying this equation at some $v\in SM$ and using \eqref{29}, \eqref{kp} and \eqref{30} we get:
        \begin{equation}
            \begin{split}
         \pi_k^*P_2^E\Psi&=-\tfrac{n+2k-4}{(n+2k-2)(n+k-3)} \sum_i\left((\pi_1^*\e_i\cdot \pi_{k-1}^*\dd_0^*\Psi)\otimes\e_i-\tfrac{1}{n+2k-4}\pi_{k-2}^*(\e_i\lrcorner \dd_0^*\Psi\otimes\e_i)\right)  \\
          &= \tfrac{n+2k-4}{n+k-3} \sum_i(g(\e_i,v)\cdot \X_-\pi_{k}^*\Psi)\otimes\e_i + \tfrac{1}{(n+2k-2)(n+k-3)}(\pi_{k-2}^*S_{k-1}\dd_0^*\Psi) \\
          &=\tfrac{n+2k-4}{n+k-3}\X_-\pi_{k}^*\Psi\otimes v
          +\tfrac{1}{(n+2k-2)(n+k-3)}(\nabla_\V\pi_{k-1}^*\dd_0^*\Psi+(k-1)\pi_{k-1}^*\dd_0^*\Psi\otimes v)\\
          &=\X_-\pi_{k}^*\Psi\otimes v-\tfrac{1}{n+k-3}\nabla_\V\X_-\pi_{k}^*\Psi.
            \end{split}
        \end{equation}
        Finally, using the fact that $\pi:SM\to M$ is a Riemannian submersion, we readily obtain at any $v\in SM$:
        \begin{equation}\label{40}
            \pi_k^*(\nabla^E\Psi)=\nabla_\HH\pi_k^*\Psi+\X\pi_k^*\Psi\otimes v.
        \end{equation}
   From \eqref{p1}--\eqref{40} we thus get:
    \begin{equation*}
            \pi_k^*P^E_3 \Psi=\pi_k^*(\nabla^E\Psi-P_1^E\Psi-P_2^E\Psi)=\nabla_\HH\pi_k^*\Psi-\frac1{k+1}\nabla_\V\X_+\pi_k^*\Psi+\tfrac{1}{n+k-3}\nabla_\V\X_-\pi_{k}^*\Psi,
    \end{equation*}
    which proves the lemma.
\end{proof}
We note that as a consequence of the preceding lemma, the operator $Z_k$ defined in \eqref{eq:Z} does not change the degree of the section it acts on (since $P_3^E$ does not change the degree).

\section{Twisted Pestov identity}

The Pestov identity is a classical identity in Riemannian geometry, see \cite{Guillemin-Kazhdan-80,Croke-Sharafutdinov-98,Paternain-Salo-Uhlmann-15} and \cite{Guillarmou-Paternain-Salo-Uhlmann-16} for the twisted version. Our aim is to obtain a pointwise version of this identity from the twisted Weitzenböck formula. Let us start with introducing the relevant curvature operators in our setting.

If $(E,\nabla^{E})$ is a vector bundle with metric connection, we denote by 
\[
R^{E} \in C^\infty(M, \Lambda^2\T^*M \otimes \End(E)),
\]
its curvature. Let $\mc{E}:=\pi^*E$ denote as before the pull-back of $E$ to $SM$ endowed with the pull-back connection $\nabla^{\mc{E}} := \pi^*\nabla^{E}$ and curvature 
\[
R^{\mc{E}}\in C^\infty(SM, \Lambda^2\T^*M \otimes \End(\mc{E})),
\]
satisfying $R^{\mc{E}}_{X,Y}(\pi^*\xi)=\pi^*(R^{E}_{X,Y}\xi)$ for all $X,Y\in\T M$ (identified with their horizontal lifts) and $\forall\xi\in C^\infty(M,E)$. Consider the vector bundle morphism $\mc{F}^{\mc{E}} :\mc{E}\to \mc{N} \otimes \mc{E}$ defined by:
\begin{equation}
\label{equation:twisted-curvature}
\langle \mc{F}^{\mc{E}}(\psi),w\otimes\psi' \rangle := \langle R^{\mc{E}}_{v,w}\psi,\psi'\rangle,
\end{equation}
for every $v \in SM, \  w \in \mc{N}_v $ and $\psi,\psi' \in \mc{E}_v$. The value of $\mc{F}^{\mc{E}}$ on pull-backs of sections of $E$ can be explicitly computed as
\begin{equation}
\label{equation:twisted-curvature1}
 \mc{F}^{\mc{E}}(\pi^*\xi)=\sum_i \e_i^\bot\otimes\pi^*( R_{v,\e_i}^E\xi),
\end{equation}
where $(\e_i)$ is a local orthonormal frame. We also define a vector bundle morphism $\mc{R} :\mc{N} \otimes\mc{E}\to \mc{N} \otimes \mc{E}$ by:
\begin{equation}
\label{equation:twisted-curvature-R}
 \mc{R}(w\otimes\psi) :=( R_{w,v}v)\otimes\psi,
\end{equation}
for every $v \in SM, \  w \in \mc{N}_v $ and $\psi\in \mc{E}_v$, where $R$ is the Riemann curvature tensor of $(M,g)$.

We will now give the relations between the operators $\mc{R}$ and $\mc{F}^{\mc{E}}$ on one side, and $q(R)$ and $R^{E}$ on the other side.

\begin{lemma}
\label{lemma:link-f}
For every $K\in C^\infty(M,\Sym^k_0 \T M)$ and $\xi  \in C^\infty(M,E)$, the following relations hold:
\begin{eqnarray}
\label{36} \nabla_\V^*  \mc{R} \nabla_\V \pi_k^* (K\otimes\xi)&=&\pi_{k}^*((q(R)K)\otimes\xi),\\
\label{37} \nabla_\V^*  \mc{F}^{\mc{E}} \pi_k^* (K\otimes\xi)&=&\tfrac12\pi_{k}^*\left(\sum_{i, j}(\e_i\wedge \e_j)_*K\otimes R^E_{\e_i,\e_j}\xi\right).
\end{eqnarray}
\end{lemma}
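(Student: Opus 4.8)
The plan is to prove both identities by a direct computation, reducing everything to the explicit local formulas for $\nabla_\V$, $\nabla_\V^*$, $\mc{R}$ and $\mc{F}^{\mc{E}}$ that have been set up, and then matching the result against Lemma \ref{qr1} (for \eqref{36}) and against the definition \eqref{qrv} of $q(R)^E$ (for \eqref{37}). Throughout I would fix a point $v\in SM$, set $x:=\pi(v)$, and choose a local orthonormal frame $(\e_i)$ of $\T M$ near $x$ which is parallel at $x$, arranging moreover that $\e_n=v$ at the base point so that $(\e_1,\dots,\e_{n-1})$ spans $\mc{N}_v$; this makes the vertical frame $(\mathbf f_i)$ in the formula $\nabla_\V^*(\sigma\otimes\psi)=-\sum_i\mathbf f_i\lrcorner\nabla^{\mc E}_{\mathbf f_i}(\sigma\otimes\psi)$ concrete.

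For \eqref{37} I would start from the pointwise formula \eqref{equation:twisted-curvature1}, namely $\mc{F}^{\mc{E}}(\pi^*\xi)=\sum_i \e_i^\bot\otimes\pi^*(R^E_{v,\e_i}\xi)$, and more generally act on $\pi_k^*(K\otimes\xi)=\pi_k^*(K)\,\pi^*\xi$ to get, by $C^\infty(M)$-linearity of $\mc{F}^{\mc E}$ in the first (non-twisted) slot absorbed via the scalar function $\pi_k^*(K)$,
\[
\mc{F}^{\mc E}\pi_k^*(K\otimes\xi)=\pi_k^*(K)\sum_i \e_i^\bot\otimes\pi^*(R^E_{v,\e_i}\xi).
\]
Then apply $\nabla_\V^*$ using the adjoint formula and the derivation property of $\nabla^{\mc E}$; since $\pi^*(R^E_{v,\e_i}\xi)$ is a pulled-back section its vertical derivative vanishes, so only the vertical derivative of the scalar $\pi_k^*(K)$ survives, which by \eqref{deriv} contributes $\e_i\lrcorner K$ against a direction. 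Collecting terms one should land on $\tfrac12\pi_k^*\big(\sum_{i,j}(\e_i\wedge\e_j)_*K\otimes R^E_{\e_i,\e_j}\xi\big)$ after using the identity $(\e_i\wedge\e_j)_*K=\e_j\cdot(\e_i\lrcorner K)-\e_i\cdot(\e_j\lrcorner K)$ and the skew-symmetry $R^E_{\e_i,\e_j}=-R^E_{\e_j,\e_i}$; the factor $\tfrac12$ emerges because the antisymmetrised double sum double-counts. The only subtlety is bookkeeping the projection $\bot$ onto $\mc{N}_v$: the $g(\e_i,v)v$ piece one would naively drop must be checked to contract to zero against the interior product defining $\nabla_\V^*$, which it does since that interior product is taken over $\V_v\perp v$.

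For \eqref{36} I would combine \eqref{30}, i.e. $\nabla_\V\pi_k^*=\pi_{\mc N}\pi_{k-1}^*S_k$, with the definition \eqref{equation:twisted-curvature-R} of $\mc R$, so that $\mc R\nabla_\V\pi_k^*(K\otimes\xi)=\sum_i\pi_{k-1}^*(\e_i\lrcorner K)\,(R_{\e_i^\bot,v}v)\otimes\pi^*\xi$ at $v$; then apply $\nabla_\V^*$ and again only the vertical derivative of the scalar $\pi_{k-1}^*(\e_i\lrcorner K)$ survives (the curvature vector $R_{\e_i,v}v$ being a pulled-back object, modulo the $\e_i^\bot$ projection whose error term one must again track and show is killed). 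Using \eqref{deriv} to differentiate $\pi_{k-1}^*(\e_i\lrcorner K)$ in the $\e_j$-direction produces $\e_j\lrcorner\e_i\lrcorner K$-type contractions, and reassembling one obtains an expression in $R$, $\lrcorner$ and $\cdot$ which I would then match with Lemma \ref{qr1}'s formula $q(R)K=-\sum_{i,j,k}R_{\e_i,\e_j}\e_k\lrcorner(\e_j\cdot\e_k\cdot(\e_i\lrcorner K))$ using the curvature symmetries and the relations \eqref{commu2}. I expect the main obstacle to be precisely this last reconciliation: the vertical-derivative computation naturally yields the curvature term in a form involving $R_{w,v}v$ contracted into symmetric-tensor operations, whereas Lemma \ref{qr1} presents $q(R)$ through the $(\e\wedge\e)_*$ action, and bridging the two requires a careful, if elementary, manipulation of first and second Bianchi-free curvature symmetries together with the commutators \eqref{commu2} — plus keeping the stray $g(\cdot,v)v$ terms from the $\pi_{\mc N}$ projections under control at every stage.
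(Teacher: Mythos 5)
There is a genuine gap, and it sits exactly at the step you dismiss as bookkeeping. Your computation rests on the claim that, when applying $\nabla_\V^*=-\sum_a\mathbf{f}_a\lrcorner\nabla^{\mc E}_{\mathbf{f}_a}$, only the vertical derivative of the scalar factor survives because $R_{\e_i,v}v$ (resp.\ $\pi^*(R^E_{v,\e_i}\xi)$) is ``a pulled-back object''. It is not: both depend on the fiber variable $v$ (quadratically, resp.\ linearly), so their vertical derivatives contribute as well. For \eqref{37} the neglected contribution is proportional to $\sum_{a,i} g(\mathbf{f}_a,\e_i)\,R^E_{\mathbf{f}_a,\e_i}\xi$, which does vanish --- but by skew-symmetry of $R^E$, not because the section is fiber-constant; your stated reason is false even though the conclusion happens to hold. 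For \eqref{36} the neglected contribution is
\begin{equation*}
-\sum_{i}\pi_{k-1}^*(\e_i\lrcorner K)\Bigl(\sum_a g\bigl(\mathbf{f}_a,\,R_{\e_i,\mathbf{f}_a}v+R_{\e_i,v}\mathbf{f}_a\bigr)\Bigr)\pi^*\xi
=-\sum_{i}\pi_{k-1}^*(\e_i\lrcorner K)\Bigl(\sum_a R(\e_i,\mathbf{f}_a,v,\mathbf{f}_a)\Bigr)\pi^*\xi,
\end{equation*}
a Ricci-type contraction which is generically nonzero. Dropping it gives the wrong intermediate expression, and no manipulation of curvature symmetries at the end will reconcile it with Lemma \ref{qr1}: expanding $q(R)K$ there via the derivation property of $\lrcorner$ shows that $q(R)$ itself contains precisely such a Ricci piece, which your computation loses.

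The paper's proof avoids the issue by converting all $v$-dependence of the curvature factors into explicit scalar polynomial prefactors \emph{before} applying $\nabla_\V^*$: writing $v=\sum_j\pi_1^*(\e_j)\e_j$ gives $R_{\e_i,v}v=\sum_{j,l}\pi_1^*(\e_j)\pi_1^*(\e_l)R_{\e_i,\e_j}\e_l$ and $R^E_{v,\e_i}=\sum_j\pi_1^*(\e_j)R^E_{\e_j,\e_i}$; the scalars are absorbed into the symmetric tensor (raising its degree), leaving a genuinely fiber-constant vector $w=R_{\e_i,\e_j}\e_l$ (resp.\ $\e_i$) in the $\pi^*\T M$ slot, to which the already-established formula \eqref{31} applies verbatim. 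The extra $(k+1)\pi_{k+2}^*$ terms that \eqref{31} produces then vanish via $\sum_l R_{\e_i,\e_j}\e_l\cdot\e_l=0$ and the skew-symmetry of $R^E$. To salvage your direct approach you would have to add the vertical derivatives of the curvature factors to your product rule --- at which point you are reproducing the paper's computation in a less organized form.
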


\begin{proof}Using \eqref{30} we compute at some $v\in SM$ the left-hand side of \eqref{36} as:
\begin{equation*}
\begin{split}
 \nabla_\V^*  \mc{R} \nabla_\V \pi_k^* (K\otimes\xi) &=\sum_i \nabla_\V^* \mc{R}\left(\pi_\mathcal{N}\left(\pi_{k-1}^*( \e_i\lrcorner K)\otimes(\e_i\otimes \xi)\right)\right) \\
& =\sum_{i} \nabla_\V^* \mc{R}\left(\pi_{k-1}^*( \e_i\lrcorner K)\pi_\mathcal{N}(\e_i)\otimes \pi^*\xi\right) \\
& =\sum_{i} \nabla_\V^*\left(\pi_{k-1}^*( \e_i\lrcorner K)R_{\e_i,v}v\otimes \pi^*\xi\right) \\
&=\sum_{i, j, l} \nabla_\V^*\left((\pi_1^*\e_j)(\pi_1^*\e_l)\pi_{k-1}^*( \e_i\lrcorner K)R_{\e_i,\e_j}\e_l\otimes \pi^*\xi\right)\\
&=\sum_{i, j, l}\nabla_\V^*\left(\pi_{k+1}^*(\e_j\cdot \e_l\cdot( \e_i\lrcorner K))R_{\e_i,\e_j}\e_l\otimes \pi^*\xi\right).
\end{split}
\end{equation*}
Using \eqref{31} we can rewrite this last sum as
$$-\sum_{i, j, l}\pi_k^*\left(R_{\e_i,\e_j}\e_l\lrcorner(\e_j\cdot \e_l\cdot( \e_i\lrcorner K))\otimes \xi\right) + (k+1)\sum_{i, j, l}\pi^*_{k+2}\left(R_{\e_i,\e_j}\e_l\cdot \e_j\cdot \e_l\cdot( \e_i\lrcorner K)\otimes \xi\right).$$
By Lemma \ref{qr1} the first summand is equal to $\pi_{k}^*((q(R)K)\otimes\xi)$. The second summand vanishes since $\sum_lR_{\e_i,\e_j}\e_l\cdot \e_l = 0$. This proves \eqref{36}. Similarly, using \eqref{equation:twisted-curvature1} we compute at $v\in SM$:
\begin{equation*}
\begin{split}
 \nabla_\V^* \mc{F}^{\mc{E}} \pi_k^* (K\otimes\xi) & =\sum_i \nabla_\V^*\left(\pi_k^* (K) (\e_i^\bot\otimes \pi^*(R_{v,\e_i}^E\xi))\right) \\
 &=\sum_i \nabla_\V^*\left(\pi_k^* (K) (\e_i\otimes \pi^*(R_{v,\e_i}^E\xi))\right)\\
&=\sum_{i, j} \nabla_\V^*\left(\pi_k^* (K)\pi_1^*(\e_j)(\e_i\otimes \pi^*(R_{\e_j,\e_i}^E\xi))\right) \\
&=\sum_{i, j} \nabla_\V^*\left(\pi_{k+1}^* (\e_j\cdot K)(\e_i\otimes \pi^*(R_{\e_j,\e_i}^E\xi))\right)\\
&\stackrel{\eqref{31}}{=}-\sum_{i, j}\pi_{k}^*(\e_i\lrcorner(\e_j\cdot K))\pi^*(R_{\e_j,\e_i}^E\xi) +(k+1)\sum_{i, j}\pi_{k+2}^* (\e_i\cdot \e_j \cdot K)\pi^*(R_{\e_j,\e_i}^E\xi).
\end{split}
\end{equation*}
The second summand vanishes because of the skew-symmetry of $R_{\e_j,\e_i}^E$ in $i$ and $j$, whereas the first summand is equal to 
\begin{align*}
    -\sum_{i, j} \pi_{k}^*\left(\e_i\lrcorner(\e_j\cdot K)\otimes R_{\e_j,\e_i}^E\xi\right) &= \sum_{i, j}\pi_{k}^*\left(\e_j\cdot(\e_i\lrcorner K)\otimes R_{\e_i,\e_j}^E\xi\right)\\ 
    &= \tfrac12\sum_{i, j} \pi_{k}^*\left((\e_i\wedge \e_j)_* K\otimes R_{\e_i,\e_j}^E\xi\right).
\end{align*}

\end{proof}

Combining \eqref{qrv} with Lemma \ref{lemma:link-f}, we obtain for every section of $\Sym^k_0 \T M\otimes E$:
\begin{equation}
\label{eq:qr}
\pi_k^*q(R)^E=(\nabla_\V^*  \mc{R}  \nabla_\V+ \nabla_\V^*  \mc{F}^{\mc{E}}) \pi_k^*.
\end{equation}
Then, using Lemma \ref{lemma:link-d} we compute for every section of $\Sym^k_0 \T M\otimes E$:
\begin{equation}
\label{equation:d*d}
\pi_k^*\dd_0^*\dd_0=-(n+2k)\X_-\pi_{k+1}^*\dd_0=-(n+2k)\X_-\X_+\pi_{k}^*,
\end{equation}
and similarly
\begin{equation}
\label{equation:dd*}
\pi_k^*\dd_0\dd_0^*=\X_+\pi_{k-1}^*\dd_0^*=-(n+2k-2)\X_+\X_-\pi_{k}^*.
\end{equation}
Finally, by \eqref{equation:z} we obtain
\begin{equation}
\label{p3}
\pi_k^*(P^E_3)^*P^E_3=Z_k^*\pi_{k}^*P^E_3=Z_k^*Z_k\pi_{k}^*.
\end{equation}
Altogether, we obtain the following:

\begin{pro}[Pointwise Localized Pestov identity]
\label{proposition:plpi}
On $C^\infty(M, \Omega^k\otimes E) \subset C^\infty(SM, \mc{E})$, the following relation holds:
 \begin{equation}\label{twistpestov}
  \nabla_\V^*  \mc{R}  \nabla_\V + \nabla_\V^*  \mc{F}^{\mc{E}}= \tfrac{k(n+2k)}{k+1} \X_-\X_+  \,- \, \tfrac{(n+k-2)(n+2k-4)}{(n+k-3)}\,\X_+\X_-\,+ \,Z_k^*Z_k.
\end{equation}
\end{pro}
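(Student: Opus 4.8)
The plan is to derive the identity by transporting the twisted Weitzenböck formula of Proposition~\ref{tw} from the base manifold to the unit tangent bundle, using the dictionary between $\dd_0,\dd_0^*,P_3^E,q(R)^E$ and the operators $\X_\pm,Z_k,\nabla_\V,\mc{R},\mc{F}^{\mc{E}}$ assembled in the preceding lemmas. First I would start from the operator identity
\[
q(R)^E = -\tfrac{k}{k+1}\,\dd_0^*\dd_0 \;+\; \tfrac{(n+k-2)(n+2k-4)}{(n+2k-2)(n+k-3)}\,\dd_0\dd_0^* \;+\; (P_3^E)^*P_3^E
\]
acting on $C^\infty(M,\Sym^k_0\T M\otimes E)$, and compose it on the left with the pullback operator $\pi_k^*$.

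Next I would replace each term by its counterpart on $SM$. For the curvature term I would invoke \eqref{eq:qr}, which reads $\pi_k^*q(R)^E=(\nabla_\V^*\mc{R}\nabla_\V+\nabla_\V^*\mc{F}^{\mc{E}})\pi_k^*$ and is itself a consequence of Lemma~\ref{lemma:link-f} combined with the definition \eqref{qrv} of $q(R)^E$. For the two second-order terms I would use \eqref{equation:d*d}, namely $\pi_k^*\dd_0^*\dd_0=-(n+2k)\X_-\X_+\pi_k^*$, and \eqref{equation:dd*}, namely $\pi_k^*\dd_0\dd_0^*=-(n+2k-2)\X_+\X_-\pi_k^*$, both obtained by iterating the intertwining relations \eqref{28}--\eqref{29} of Lemma~\ref{lemma:link-d}. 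For the last term I would use \eqref{p3}, i.e. $\pi_k^*(P_3^E)^*P_3^E=Z_k^*Z_k\pi_k^*$. Substituting and simplifying the numerical coefficients (using $-\tfrac{k}{k+1}\cdot(-(n+2k))=\tfrac{k(n+2k)}{k+1}$ and $\tfrac{(n+k-2)(n+2k-4)}{(n+2k-2)(n+k-3)}\cdot(-(n+2k-2))=-\tfrac{(n+k-2)(n+2k-4)}{n+k-3}$) yields the claimed identity, precomposed with $\pi_k^*$.

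Finally, to strip the $\pi_k^*$ from both sides, I would recall that the twisted version of \eqref{equation:pullback2} makes $\pi_k^*:\Sym^k_0\T_xM\otimes E_x\to\Omega_k(x)\otimes E_x$ a fibrewise isomorphism, so that every section of $\Omega^k\otimes E\subset\mc{E}$ is of the form $\pi_k^*\Psi$ for a unique $\Psi\in C^\infty(M,\Sym^k_0\T M\otimes E)$; the operator identity therefore descends to an identity on $C^\infty(M,\Omega^k\otimes E)$, which is exactly \eqref{twistpestov}.

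At this level there is no genuine obstacle: all the analytical content has already been absorbed into Lemmas~\ref{lemma:link-d}, \ref{lemma:link-v}, \ref{lemma:link-f} and into the computation of $P_3^E$ on $SM$ in \eqref{equation:z} — had these not been available, the real work would have been exactly in establishing the pullback relations \eqref{eq:qr} and \eqref{equation:d*d}--\eqref{p3}. As it stands, the proof of the proposition is a bookkeeping exercise, and the only points that demand attention are the sign conventions and the degree shifts hidden in \eqref{equation:d*d}--\eqref{equation:dd*} (where one must correctly combine $\X_+\pi_k^*=\pi_{k+1}^*\dd_0$ with $\X_-\pi_{k+1}^*=-\tfrac1{n+2k}\pi_k^*\dd_0^*$, and analogously for $\dd_0\dd_0^*$), together with checking that the rational coefficients coming from Proposition~\ref{tw} and those coming from the pullback relations combine into the ones displayed in the statement.
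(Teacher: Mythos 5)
Your proposal is correct and coincides with the paper's own proof: Proposition \ref{tw} is pulled back by $\pi_k^*$, each term is converted via \eqref{eq:qr}, \eqref{equation:d*d}--\eqref{equation:dd*} and \eqref{p3}, and the fibrewise isomorphism $\pi_k^*$ is then stripped off. The coefficient bookkeeping you display is also exactly right.
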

%{\color{red} there was a typo in the first term which I corrected}

\begin{proof}
  Every section of $\Omega^k\otimes E$ can be written as $\pi_k^*\Psi$ for some twisted symmetric tensor $\Psi\in C^\infty(M,\Sym^k_0 \T M\otimes E)$. Then the twisted Weitzenböck formula (Proposition \ref{tw}) together with \eqref{eq:qr}--\eqref{p3} gives directly \eqref{twistpestov}.
\end{proof}

Applying \eqref{twistpestov} to $\Psi \in C^\infty(M,\Omega^k \otimes E)$, pairing with $\Psi$ and then integrating over $SM$ with respect to the Liouville measure, we retrieve the localized Pestov identity in its integrated version \cite[Lemma 2.3]{Cekic-Lefeuvre-Moroianu-Semmelmann-21}.
\bibliographystyle{alpha}

\bibliography{Biblio}

\end{document}